\newtheorem{theorem}{Theorem}[section]
\newtheorem{lemma}[theorem]{Lemma}
\newtheorem{proposition}{Proposition}
\newtheorem{corollary}{Corollary}
\theoremstyle{definition}
\newtheorem{definition}[theorem]{Definition}
\theoremstyle{remark}
\numberwithin{equation}{section}
\renewcommand{\phi}{{\varphi}}
\newcommand{\NS}{\mathcal{S}}
\newcommand{\NA}{\mathbf{A}}
\newcommand{\V}{\mathcal{V}}
\renewcommand{\neg}{{\sim}}
\newcommand\Tstrut{\rule{0pt}{2.6ex}}
\begin{document}

\title{Algebraic semantics for Nelson's logic~$\NS$}

\author{Thiago Nascimento}
\address{Programa de P\'{o}s-gradua\c{c}\~{a}o em Sistemas e Computa\c{c}\~{a}o, UFRN, Natal, Brazil}
\email{thiagnascsilva@gmail.com}

\author{Umberto Rivieccio}
\address{Departamento de Inform\'atica e Matem\'atica Aplicada, UFRN, Natal, Brazil}
\email{urivieccio@dimap.ufrn.br}

\author{Jo\~{a}o Marcos}
\address{Departamento de Inform\'atica e Matem\'atica Aplicada, UFRN, Natal, Brazil}
\email{jmarcos@dimap.ufrn.br}

\author{Matthew Spinks}
\address{Universit\`{a}  degli Studi di Cagliari, Cagliari, Italy}
\email{mspinksau@yahoo.com.au}




\keywords{Nelson's logics, Involutive residuated lattices, Algebraic Semantics, Algebraic logic.}

\begin{abstract}
Besides the better-known Nelson's Logic and Paraconsistent Nelson's Logic, in ``Negation and separation of concepts in constructive systems'' (1959), David Nelson introduced a logic called~$\NS$ with the aim of analyzing the constructive content of provable negation statements in mathematics. 
Motivated by results from Kleene, in ``On the Interpretation of Intuitionistic Number Theory'' (1945), Nelson investigated a more symmetric recursive definition of truth, according to which a formula could be either primitively verified or refuted. 
The logic~$\NS$ was defined by means of a calculus lacking the contraction rule and having infinitely many schematic rules, and no semantics was provided. 
This system received little attention from researchers; it even remained unnoticed that on its original presentation it was inconsistent. Fortunately, the inconsistency was caused by typos and by a rule whose hypothesis and conclusion were swapped. 
We investigate 
a corrected version of the logic~$\NS$, and focus on its propositional fragment, showing that it is algebraizable in the sense of Blok and Pigozzi (in fact, implicative) with respect to a certain 
class of involutive residuated lattices. 
We thus introduce the first (algebraic) semantics for~$\NS$ as well as a finite Hilbert-style calculus equivalent to Nelson's presentation; 
we also compare~$\NS$ with the other two above-mentioned logics of the Nelson family.
Our approach is along the same lines of (and partly relies on) previous algebraic work on Nelson's logics  
due to M.~Busaniche, R.~Cignoli, S.~Odintsov, M.~Spinks and R.~Veroff.
\end{abstract}

\maketitle

\section*{Introduction}
\noindent
To study the notion of constructible falsity, David Nelson introduced a number 
of systems of non-classical logic that
combine an intuitionistic approach to truth with a dual-intuitionistic treatment of falsity. 
Nelson's logics ($\NS$, $\mathcal{N}3$, and $\mathcal{N}4$) accept some notable theorems of classical logic, such as $\neg\neg\phi\Leftrightarrow\phi$, while rejecting others, such as $(\phi \Rightarrow (\phi \Rightarrow \psi)) \Rightarrow (\phi \Rightarrow \psi)$ and $(\phi \land \neg \phi) \Rightarrow \psi$. Nelson~introduced these logics with the aim of studying constructive proofs in Number Theory. To such an end, he gave a definition of truth \cite[Definition 1]{Nel49} (analogous to Kleene's \cite[p. 112]{Kl45}) 
according to which either a formula or its negation should be realized by some natural number.

Nelson's logic $\mathcal{N}3$ was introduced in \cite{Nel49} and $\mathcal{N}4$, a paraconsistent version of~$\mathcal{N}3$,  was introduced in \cite{AlNe84}. 
$\mathcal{N}3$ is in fact an axiomatic extension of $\mathcal{N}4$ by 
the axiom\footnote{The presence of two implications, the \emph{strong} one  ($\Rightarrow$) mentioned earlier and the \emph{weak} one ($\to$), is a distinctive feature
	of Nelson's logics; more on this below.} $\neg \phi \to (\phi \to \psi)$. The logic $\mathcal{N}3$ is by now well studied, both via a proof-theoretic approach and through algebraic methods; in particular, Odintsov \cite{Od08} proved that $\mathcal{N}4$ (thus also $\mathcal{N}3$) is algebraizable \`a la Blok-Pigozzi \cite{BP89}.



In~\cite{Nel59} Nelson also introduced the logic~$\NS$, aimed at the study of realizability. 
As  suggested by L.~Humberstone~\cite[Ch.\ 8.2, p.\ 1239--40]{Hu}, the introduction of~$\NS$ can perhaps also be viewed as
an attempt to remedy what some logicians consider an undesirable feature of  $\mathcal{N}3$ (and $\mathcal{N}4$), 
namely the fact that there are formulas $\phi$, $\psi$ in $\mathcal{N}3$ (and $\mathcal{N}4$) that are mutually interderivable  but
such that their negations   $\neg \phi$, $\neg \psi$ fail to be interderivable.
 It is useful to recall that these two phenomena are in general disassociated;
the latter stems from the failure of the contraposition law for the so-called \emph{weak implication} 
connective $\to$ of $\mathcal{N}3$ (and $\mathcal{N}4$), while the former entails that 
$\mathcal{N}3$ and $\mathcal{N}4$ are non-congruential  (or, as other authors say,
non-self-extensional) logics: that is, the logical interderivability relation fails to be a congruence of 
the formula algebra. Now, while $\NS$ is also a non-congruential logic, its implication connective
(here denoted $\Rightarrow$) does satisfy the contraposition law: in fact in~$\NS$ one has that   
$(\phi \Rightarrow \psi) \land ( \psi \Rightarrow \phi)$ is a theorem  if and only if 
$(\neg \phi \Rightarrow \neg \psi) \land (  \neg \psi \Rightarrow \neg \phi)$ is a theorem. 
In other words $\NS$, although non-congruential if we look at its interderivability relation, 
enjoys at least 
\emph{$\Leftrightarrow$-congruentiality}
in
Humberstone's terminology (relative to the bi-implication $\Leftrightarrow$ 
defined, in the usual way, as follows: $ \phi \Leftrightarrow \psi : =$ $( \phi \Rightarrow  \psi) \land (   \psi \Rightarrow  \phi)$)\footnote{Actually, we now know that $\mathcal{N}3$ (and $\mathcal{N}4$) are also $\Leftrightarrow$-congruential for a suitable choice
	of implication $\Rightarrow$ (called \emph{strong implication}) that can be defined using the weak one $\to$; but  Nelson may well not have been aware of this while writing~\cite{Nel59}.}. 

Nelson's original presentation of~$\NS$ 
has infinitely many schematic rules and no algebraic semantics;
\cite{Nel59} also leaves unclear 
whether $\mathcal{N}3$ is comparable with~$\NS$ (and if so, which of the two is stronger). 
Unlike its relatives $\mathcal{N}3$ and $\mathcal{N}4$, the logic~$\NS$ received little attention after~\cite{Nel59} and 
basic questions about it 
were left open, for example: 
Is $\NS$ algebraizable? Can~$\NS$ be finitely axiomatized? What are the exact relations between~$\NS$ and $\mathcal{N}3$, and between~$\NS$ and $\mathcal{N}4$? 
In the present paper we will use the modern techniques of algebraic logic to answer these questions.  

Our study will follow the same lines of previous papers by M.~Busaniche, R.~Cignoli, S.~Odintsov, M.~Spinks and R.~Veroff
on (algebraic models of) $\mathcal{N}3$ and $\mathcal{N}4$ (see, e.g., 	\cite{Busa10,Ci86,Od08,SpVe08a,SpinXX}), which in turn
rely on classic work by H.~Rasiowa on the algebraization of non-classical logics.  
These investigations have shown that the algebraic approach to Nelson's logics may be particularly insightful, as it allows
to view them as either  conservative
expansions of the negation-free fragment of intuitionistic logic 
by the addition of a new unary logical connective of \emph{strong negation} ($\neg$) or as
axiomatic extensions of well-known substructural/relevance logics.
The first perspective allows us to establish a particularly useful link between
algebraic models of $\mathcal{N}3$/$\mathcal{N}4$ and models of intuitionistic logic
(via  the so-called \emph{twist-structure} construction --- see especially \cite{Od08}), while the second
affords the possibility of exploiting general results and techniques that have been introduced 
in the study of residuated structures; this is the approach of 
\cite{SpVe08a,SpinXX} as well as \cite{Igpl}, and that we shall also take in the present paper  (see especially Subsection~\ref{ss:alt}).

The paper is organized as follows. In Section \ref{section2} we present the propositional fragment of the logic~$\NS$ and highlight some of its theorems, which will later be used to establish its algebraizability. In Section~\ref{section3} we prove that~$\NS$ is algebraizable and present 
its equivalent algebraic semantics. In Section~\ref{section4} we provide another calculus for~$\NS$, one that has a finite number of schematic axioms and only one schematic rule (\textit{modus ponens}). We point out that having only one rule makes it 
easy
to prove the Deduction Metatheorem in the standard way using induction over derivations.
In Section~\ref{section5} we prove that $\mathcal{N}3$ is a proper axiomatic extension of~$\NS$, and that~$\NS$ and $\mathcal{N}4$ are not extensions of each other.
Proofs of some of the main new results are to be found in an Appendix to this paper.

\section{Nelson's logic~$\NS$}\label{section2}

In this section we recall Nelson's original presentation of the propositional fragment of~$\NS$ \cite{Nel59} and we highlight some theorems of~$\NS$ that will be used further on to establish its algebraizability.

As is now usual, here we take a sentential logic~$\mathcal{L}$ to be a structure containing a subs\-titu\-tion-invariant consequence relation~$\vdash_\mathcal{L}$ defined over an algebra of formulas~$\mathbf{Fm}$ freely generated by a denumerable set of propositional variables $\{p,q,r,\ldots\}$ over a given language~$\Sigma$.  We will henceforth refer to algebras using boldface strings (such as $\mathbf{Fm}$ and $\mathbf{A}$), and use the corresponding italicized version of these same strings (such as $Fm$ and $A$) to refer to their corresponding carriers. Fixing a given logic, we will use $\varphi$, $\psi$ and~$\gamma$, possibly decorated with subscripts, to refer to arbitrary formulas of it.

\begin{definition}\label{ax:S}
	Nelson's logic $\NS = \langle \mathbf{Fm}, \vdash_{\NS} \rangle$ is 
	the sentential logic in the language $\langle \land, \lor, \Rightarrow, \neg, \bot \rangle $ of type $\langle 2, 2, 2, 1, 0 \rangle$
	defined by the Hilbert-style calculus with the 
	schematic axioms and rules listed below. 
	As usual, $\phi \Leftrightarrow \psi$ will be used to abbreviate $(\phi \Rightarrow \psi) \land (\psi \Rightarrow \phi )$.
\end{definition}

\noindent
{\bf Axioms}
{\small
	\begin{description}
		\item[\texttt{(A1)}] $\phi \Rightarrow \phi$ 
		\item[\texttt{(A2)}] $\bot \Rightarrow \phi$
		\item[\texttt{(A3)}] $\neg \phi \Rightarrow (\phi \Rightarrow \bot)$
		\item[\texttt{(A4)}] $\neg \bot$
		\item[\texttt{(A5)}] $(\phi \Rightarrow \psi) \Leftrightarrow (\neg \psi \Rightarrow \neg \phi)$
	\end{description}
}

\noindent
{\bf Rules}\medskip

{\small
	\noindent
	\begin{tabular}{lll}
		\infer[\texttt{(P)}]{\Gamma \Rightarrow (\psi \Rightarrow (\phi \Rightarrow \gamma))}{\Gamma \Rightarrow (\phi \Rightarrow (\psi \Rightarrow \gamma))} 
		\Tstrut\ \ 
		&  
		\infer[\texttt{(C)}]{\phi \Rightarrow (\phi \Rightarrow \gamma)}{\phi \Rightarrow (\phi \Rightarrow (\phi \Rightarrow \gamma))}
		\Tstrut\ \  
		&
		\infer[\texttt{(E)}]{\Gamma \Rightarrow \gamma }{\Gamma \Rightarrow \phi  \quad \phi \Rightarrow \gamma} 
		\Tstrut\ \ 
		\\[3mm] 
		\infer[\mathrm{(\Rightarrow \texttt{l})}]
		{\Gamma \Rightarrow ((\phi \Rightarrow \psi) \Rightarrow \gamma)}{\Gamma \Rightarrow \phi  \quad \psi \Rightarrow \gamma} &   \infer[\mathrm{(\Rightarrow \texttt{r})}]{\phi \Rightarrow \gamma}{\gamma}  &
		\infer[\mathrm{(\land \texttt{l1})}]{(\phi \land \psi) \Rightarrow \gamma}{\phi \Rightarrow \gamma} \\[3mm] 
		\infer[\mathrm{(\land \texttt{l2})}]{(\phi \land \psi) \Rightarrow \gamma}{\psi \Rightarrow \gamma}  & \infer[\mathrm{(\land \texttt{r})}]
		{\Gamma \Rightarrow (\phi \land \psi)}{\Gamma \Rightarrow \phi & \quad \Gamma \Rightarrow \psi}   &
		\infer[\mathrm{(\lor \texttt{l1})}] 
		{(\phi \lor \psi) \Rightarrow \gamma }{\phi \Rightarrow \gamma  \quad \psi \Rightarrow \gamma} \\[3mm] 
		\infer[\mathrm{(\lor \texttt{l2})}] 
		{((\phi \lor \psi) \Rightarrow^2 \gamma) }{\phi \Rightarrow^2 \gamma  & & \psi \Rightarrow^2 \gamma} 
		\hspace{2mm}
		& 
		\infer[\mathrm{(\lor \texttt{r1} )}]{\Gamma \Rightarrow (\phi \lor \psi)}{\Gamma \Rightarrow \phi}  & \infer[\mathrm{(\lor \texttt{r2})}]{\Gamma \Rightarrow (\phi \lor \psi)}{\Gamma \Rightarrow \psi} \\[3mm] 
		\infer[\mathrm{(\neg \Rightarrow \texttt{l})}]{\neg(\phi \Rightarrow \psi) \Rightarrow \gamma}{(\phi \land \neg \psi) \Rightarrow \gamma} 
		& 
		\infer[\mathrm{(\neg \Rightarrow \texttt{r})}]{\Gamma \Rightarrow^{2} \neg (\phi \Rightarrow \psi)}{\Gamma \Rightarrow^{2} (\phi \land \neg \psi)}  
		\hspace{2mm}
		&
		\infer[\mathrm{(\neg \land \texttt{l})}]{\neg(\phi \land \psi) \Rightarrow \gamma }{(\neg \phi \lor \neg \psi) \Rightarrow \gamma} \\[3mm] 
		\infer[\mathrm{(\neg \land\texttt{r})}]{\Gamma \Rightarrow \neg (\phi \land \psi)}{\Gamma \Rightarrow (\neg \phi \lor \neg \psi)}  &
		\infer[\mathrm{(\neg \lor \texttt{l})}]{\neg (\phi \lor \psi) \Rightarrow \gamma}{(\neg \phi \land \neg \psi) \Rightarrow \gamma} &  \infer[\mathrm{(\neg\lor\texttt{r})}]{\Gamma \Rightarrow \neg(\phi \lor \psi)}{\Gamma \Rightarrow (\neg \phi \land \neg \psi)}  \\[3mm]
		\infer[\mathrm{(\neg \neg \texttt{l})}]{\neg \neg \phi \Rightarrow \gamma}{\phi \Rightarrow \gamma} & \infer[\mathrm{(\neg \neg\texttt{r})}]{\Gamma \Rightarrow \neg \neg \phi}{\Gamma \Rightarrow \phi} & 
		\\
	\end{tabular}
}
\medskip

In the above rules, following Nelson's notation, $\Gamma = \{ \phi_1, \phi_2, \ldots, \phi_n \}$ is a finite set of formulas and the following abbreviations are employed:
\smallskip

\noindent
$\Gamma \Rightarrow \phi \ := \phi_{1} \Rightarrow (\phi_{2} \Rightarrow (\ldots  \Rightarrow (\phi_{n} \Rightarrow \phi)\ldots))$\\
%
$\phi \Rightarrow^{2} \psi := \phi \Rightarrow (\phi \Rightarrow \psi)$\\
$\Gamma \Rightarrow^{2} \phi := \phi_{1} \Rightarrow^{2} (\phi_{2} \Rightarrow^{2}( \ldots  \Rightarrow^{2} (\phi_{n} \Rightarrow^{2} \phi)\ldots))$\smallskip
\\
Moreover, 
when $\Gamma = \emptyset$, we take $\Gamma \Rightarrow \phi := \phi$.

Notice that we have fixed obvious infelicities in the rules
$\mathrm{(\land \texttt{l2})}$, $\mathrm{(\land \texttt{r})}$ and  $\mathrm{(\neg \Rightarrow \texttt{r})}$ as they appear in \cite[pp.214--5]{Nel59}. For example,  the original rule $\mathrm{(\land \texttt{l2})}$ in Nelson's paper was:
$$\infer[\mathrm{(\land \texttt{l2})}]{\psi \Rightarrow \gamma}{(\phi \land \psi) \Rightarrow \gamma}$$
This clearly makes the logic inconsistent. Indeed, taking $\phi = \gamma$, we have:
$$\infer[\mathrm{(\land \texttt{l2})}]{\psi \Rightarrow \gamma}{(\gamma \land \psi) \Rightarrow \gamma}$$
Now, since $(\gamma \land \psi) \Rightarrow \gamma$ is a theorem (see Prop.~\ref{prop1}, below), $\psi \Rightarrow \gamma$ is a theorem too. Choosing~$\psi$ as an axiom, we would conclude thus that~$\gamma$ is a theorem for any formula~$\gamma$. 

We note in passing that the rule \texttt{(C)}, called \emph{weak condensation} by Nelson, replaces (and is indeed a weaker form of) the usual contraction rule: 
$$\infer{\phi \Rightarrow \psi}{\phi \Rightarrow (\phi \Rightarrow \psi)}$$
Rule \texttt{(C)} is also known  in the literature as \emph{3-2 contraction}
\cite[p.~389]{Re93c}
and 
corresponds,
on  algebraic models, 
to the 
property of \emph{three-potency} (see Subsection~\ref{ss:alt}).

\noindent Also, do note that we obtain \textit{modus ponens}, $\mathrm{\texttt{(MP)}}$, by taking $\Gamma = \emptyset$ in rule $\mathrm{\texttt{(E)}}$:
$$\infer{\gamma }{\phi  \quad \phi \Rightarrow \gamma}$$

It is worth noticing that, despite appearances, Nelson's system~$\NS$ is a Hilbert-style calculus, rather than a sequent system.  Its underlying notion of derivation, $\vdash_\NS$, is the usual one.  
Henceforth, for any logic~$\mathcal{L}$ and any set of formulas~$\Gamma\cup\Pi$, we shall write $\Gamma\vdash_\mathcal{L}\Pi$ to say that $\Gamma\vdash_\mathcal{L}\pi$ for every $\pi\in\Pi$.  By $\Gamma\dashv\vdash_\mathcal{L}\Pi$ we will abbreviate the double assertion $\Gamma\vdash_\mathcal{L}\Pi$ and $\Pi\vdash_\mathcal{L}\Gamma$.

One of  the crucial steps in proving that a logic is algebraizable (in the sense of Blok and Pigozzi \cite[Definition 2.2]{BP89}) is to prove that it satisfies certain congruence properties.
In the present context, this entails checking that $\phi \Leftrightarrow \psi \vdash_{\NS} \neg \phi \Leftrightarrow \neg \psi$ and 
$\{\phi_1 \Leftrightarrow \psi_1,  \phi_2 \Leftrightarrow \psi_2\} \vdash_{\NS} (\phi_1 \bullet \phi_2) \Leftrightarrow (\psi_1 \bullet \psi_2)$ for each  connective $ \bullet \in \{ \land, \lor, \Rightarrow\}$. The following auxiliary results will be used to prove that much, in the next section.

\begin{proposition} 	
	\label{p:ths}
	The following formulas are theorems of~$\NS$:
	\begin{enumerate}
		\label{prop1}
		\item  $(\phi \land \psi) \Rightarrow \phi$
		\item $(\phi \land \psi) \Rightarrow \psi$
		\item $\phi \Rightarrow ( \phi \lor \psi)$
		\item  $\psi \Rightarrow ( \phi \lor \psi)$
		\item $(\phi \Rightarrow (\psi \Rightarrow \gamma)) \Leftrightarrow (\psi \Rightarrow (\phi \Rightarrow \gamma))$
	\end{enumerate}
\end{proposition}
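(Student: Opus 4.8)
The plan is to derive all five formulas directly inside the Hilbert-style calculus of Definition~\ref{ax:S}, with no appeal to any semantics. Items (1)--(4) each reduce to a single application of a lattice rule to the relevant instance of axiom \texttt{(A1)}, while item (5) is obtained from two applications of the permutation rule \texttt{(P)} followed by one application of $(\land\texttt{r})$. Before starting I would recall the bookkeeping conventions attached to the abbreviation $\Gamma \Rightarrow \phi$: for $\Gamma = \{\chi\}$ it unfolds to $\chi \Rightarrow \phi$, and for $\Gamma = \emptyset$ it is just $\phi$. Keeping track of this is the only point in the whole argument that needs any care.

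For (1) I would take the instance $\phi \Rightarrow \phi$ of \texttt{(A1)} and apply rule $(\land\texttt{l1})$ with $\gamma := \phi$, obtaining $(\phi \land \psi) \Rightarrow \phi$. Item (2) is identical, using $(\land\texttt{l2})$ with $\gamma := \psi$ on the instance $\psi \Rightarrow \psi$ of \texttt{(A1)}. Dually, for (3) I would regard $\phi \Rightarrow \phi$ as $\Gamma \Rightarrow \phi$ with $\Gamma := \{\phi\}$ and apply $(\lor\texttt{r1})$ to get $\phi \Rightarrow (\phi \lor \psi)$; item (4) is the same with $(\lor\texttt{r2})$ and $\Gamma := \{\psi\}$ on $\psi \Rightarrow \psi$.

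For (5) I would establish the two implications of the biconditional separately. Putting $\Gamma := \{\phi \Rightarrow (\psi \Rightarrow \gamma)\}$, the \texttt{(A1)}-instance $(\phi \Rightarrow (\psi \Rightarrow \gamma)) \Rightarrow (\phi \Rightarrow (\psi \Rightarrow \gamma))$ is precisely $\Gamma \Rightarrow (\phi \Rightarrow (\psi \Rightarrow \gamma))$, so rule \texttt{(P)} yields $\Gamma \Rightarrow (\psi \Rightarrow (\phi \Rightarrow \gamma))$, i.e.\ $(\phi \Rightarrow (\psi \Rightarrow \gamma)) \Rightarrow (\psi \Rightarrow (\phi \Rightarrow \gamma))$. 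Interchanging the roles of $\phi$ and $\psi$ throughout this short derivation gives the converse implication $(\psi \Rightarrow (\phi \Rightarrow \gamma)) \Rightarrow (\phi \Rightarrow (\psi \Rightarrow \gamma))$. Finally I would apply $(\land\texttt{r})$ with $\Gamma := \emptyset$ to these two theorems; since the $\Gamma = \emptyset$ instance of that rule is exactly conjunction introduction at the theorem level, this produces the conjunction of the two implications, which is by definition the required biconditional $(\phi \Rightarrow (\psi \Rightarrow \gamma)) \Leftrightarrow (\psi \Rightarrow (\phi \Rightarrow \gamma))$.

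In short, there is no genuine obstacle: every item is a one- or two-step derivation from \texttt{(A1)}. The only thing to watch is the unfolding of $\Gamma \Rightarrow (\cdot)$ --- in particular, noticing that \texttt{(A1)} already delivers the premise of \texttt{(P)} in exactly the shape required, and that the empty-$\Gamma$ case of $(\land\texttt{r})$ is what lets one glue the two halves of the biconditional together in~(5).
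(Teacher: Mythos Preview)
Your proposal is correct and matches the paper's approach: the paper says all derivations are straightforward, details only item~(1) via \texttt{(A1)} and $(\land\texttt{l1})$ exactly as you do, and leaves the rest to the reader. Your explicit treatments of (2)--(4) and of (5) via \texttt{(P)} and $(\land\texttt{r})$ with $\Gamma=\emptyset$ are precisely the intended straightforward fillings-in.
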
 

\begin{proof}
	All justifying derivations are straightforward. 
	We detail the first item, as an example:
	$$
	\infer[\mathrm{(\land \texttt{l1})}]{(\phi\land\psi) \Rightarrow \phi}{\infer[\texttt{(A1)}]{\phi \Rightarrow \phi}{}}
	$$
\end{proof}

\begin{proposition} 	
	\label{p:eqv}
	$\{\varphi\Leftrightarrow\psi\} \dashv\vdash_\NS \{\varphi\Rightarrow\psi,\psi\Rightarrow\varphi\}$.
\end{proposition}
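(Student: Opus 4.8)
The plan is to unfold the abbreviation $\varphi \Leftrightarrow \psi := (\varphi \Rightarrow \psi) \land (\psi \Rightarrow \varphi)$ and to establish the two halves of the claimed interderivability separately, each by a short derivation that uses nothing beyond the conjunction rules of the calculus, \texttt{modus ponens}, and the conjunction-elimination theorems already recorded in Proposition~\ref{p:ths}.

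For the direction $\{\varphi \Leftrightarrow \psi\} \vdash_\NS \{\varphi \Rightarrow \psi, \psi \Rightarrow \varphi\}$, I would instantiate items~(1) and~(2) of Proposition~\ref{p:ths} with the compound formulas $\phi := (\varphi \Rightarrow \psi)$ and $\psi := (\psi \Rightarrow \varphi)$, obtaining that both $((\varphi \Rightarrow \psi) \land (\psi \Rightarrow \varphi)) \Rightarrow (\varphi \Rightarrow \psi)$ and $((\varphi \Rightarrow \psi) \land (\psi \Rightarrow \varphi)) \Rightarrow (\psi \Rightarrow \varphi)$ are theorems of $\NS$. Applying $\texttt{(MP)}$ (the instance of rule $\texttt{(E)}$ with $\Gamma = \emptyset$) to each of these together with the hypothesis $\varphi \Leftrightarrow \psi$ then yields $\varphi \Rightarrow \psi$ and $\psi \Rightarrow \varphi$, as desired.

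For the converse direction $\{\varphi \Rightarrow \psi, \psi \Rightarrow \varphi\} \vdash_\NS \{\varphi \Leftrightarrow \psi\}$, I would invoke the rule $(\land \texttt{r})$ with $\Gamma = \emptyset$; recalling the convention that $\emptyset \Rightarrow \phi := \phi$, this schematic rule specializes to: from $\phi$ and $\psi$ infer $\phi \land \psi$. Taking $\phi := (\varphi \Rightarrow \psi)$ and $\psi := (\psi \Rightarrow \varphi)$, the two hypotheses give us $(\varphi \Rightarrow \psi) \land (\psi \Rightarrow \varphi)$, which is by definition $\varphi \Leftrightarrow \psi$.

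Since both derivations are extremely short and appeal only to results already in hand, there is no genuine obstacle here; the only points that need a little care are the correct reading of the $\Gamma = \emptyset$ instances of the schematic rules $\texttt{(E)}$ and $(\land \texttt{r})$, and the bookkeeping involved in substituting the compound formulas $\varphi \Rightarrow \psi$ and $\psi \Rightarrow \varphi$ for the schematic variables of Proposition~\ref{p:ths}. This lemma is worth isolating precisely because it lets us freely pass between the single formula $\varphi \Leftrightarrow \psi$ and the pair $\{\varphi \Rightarrow \psi, \psi \Rightarrow \varphi\}$ when verifying the congruence conditions needed for algebraizability in the next section.
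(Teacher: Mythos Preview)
Your proposal is correct and follows exactly the same approach as the paper's own proof, which invokes Proposition~\ref{p:ths}.1--2 (plus \texttt{(MP)}) for one direction and the rule $(\land\texttt{r})$ with $\Gamma=\emptyset$ for the other. Your write-up simply spells out in detail what the paper states in a single sentence.
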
 	
\begin{proof} 	
	Such a logical equivalence is easily justified by Prop.\ref{p:ths}.1--2 and by considering the rule $\mathrm{(\land \texttt{r})}$ with $\Gamma=\emptyset$.
\end{proof}

\section{$\NS$ is algebraizable} \label{section3}

In this section we prove that~$\NS$ is algebraizable in the sense of Blok and Pigozzi (it is, in fact, implicative \cite[Definition 2.3]{Font16}), and we give two alternative presentations for its equivalent algebraic semantics (to be called `$\NS$-algebras'). 
The first one is obtained via the algorithm of \cite[Theorem 2.17]{BP89}, while the second one is closer to the usual axiomatizations of classes of residuated lattices, which are the algebraic counterparts of many logics in the substructural family. As a particular advantage, the second presentation of~$\NS$-algebras will allow us to see at a glance that they form an equational class, and will also make it easier to compare them with other known classes of algebras for substructural logics.

\begin{definition}
	\label{def:imp}
	An \emph{implicative logic} is a sentential logic $\mathcal{L}$  
	whose underlying algebra of formulas in a language~$\Sigma$ has a term $\alpha(p,q)$ in two variables that satisfies
	the following conditions:\smallskip\\
	\begin{tabular}{ll}
		{\rm [IL1]\ \ } & $\vdash_{\mathcal{L}} \alpha(p, p) $ \\
		{\rm [IL2]} & $\alpha(p,q), \alpha(q, r) \vdash_{\mathcal{L}} \alpha(p, r)$ \\
		{\rm [IL3]} & $ p, \alpha(p,q) \vdash_{\mathcal{L}} q$ \\
		{\rm [IL4]} & $q \vdash_{\mathcal{L}} \alpha(p,q)$ \\
		{\rm [IL5]} & for each $n$-ary 
		$\bullet\in\Sigma$, \\
		& $\bigcup_{i=1}^n \{ \alpha(p_{i}, q_{i}), \alpha(q_{i}, p_{i}) \} \vdash_{\mathcal{L}} \alpha(\bullet (p_{1}, \ldots, p_{n}), \bullet(q_{1}, \ldots, q_{n}))$
	\end{tabular}\smallskip\\
	We call any such $\alpha$ an \emph{$\mathcal{L}$-implication}.
\end{definition}

Given an algebra of formulas $\mathbf{Fm}$ of the language~$\Sigma$, the associated set $Fm \times Fm$ of \emph{equations} will henceforth be denoted by $\mathsf{Eq}$; we will write $\phi \approx \psi$ rather than $( \phi, \psi )\in\mathsf{Eq}$. Let $\mathbf{A}$ be an algebra with the same similarity type as $\mathbf{Fm}$.
A~homomorphism $\V \colon \mathbf{Fm} \to\mathbf{A}$ is called a \emph{valuation in~$\mathbf{A}$}.
We say that a valuation~$\V$ in~$\mathbf{A}$ \emph{satisfies $\phi \approx  \psi$ in $\mathbf{A}$} when $\V(\phi) = \V(\psi)$; we say that an algebra $\mathbf{A}$ \emph{satisfies} $\phi \approx  \psi$ when all valuations in~$\mathbf{A}$ satisfy it.

\begin{definition}	
	A logic $\mathcal{L}$ is \emph{algebraizable} if and only if there are equations 
	$\mathsf{E}(\phi) \subseteq \mathsf{Eq}$ and a transform $\mathsf{Eq} \stackrel{\rho}{\longmapsto} 2^{Fm}$, denoted by $\Delta(\phi, \psi) := \rho(\phi \approx \psi)$,
	such that
	$\mathcal{L}$ respects the following conditions: \smallskip\\
	\begin{tabular}{ll}
		{\rm [Alg]} & $\phi \dashv \vdash_{\mathcal{L}} \Delta(\mathsf{E}(\phi))$ \\
		{\rm [Ref]} & $\vdash_{\mathcal{L}} \Delta(\phi, \phi)$ \\
		{\rm [Sym]} & $\Delta(\phi, \psi) \vdash_{\mathcal{L}} \Delta(\psi, \phi)$ \\
		{\rm [Trans]\ \ } & $\Delta(\phi, \psi) \cup \Delta(\psi, \gamma) \vdash_{\mathcal{L}} \Delta(\phi, \gamma)$ \\
		{\rm [Cong]} & for each $n$-ary $\bullet \in \Sigma$, \\
		& $\bigcup_{i=1}^n \Delta(\phi_{i}, \psi_{i}) \vdash_{\mathcal{L}} \Delta(\bullet(\phi_{1}, \cdots, \phi_{n}), \bullet(\psi_{1}, \cdots, \psi_{n}))$ \\
	\end{tabular}\smallskip\\	
	We call any such $\mathsf{E}(\phi)$ the set of \emph{defining equations} and any such $\Delta(\phi, \psi)$ the set of \emph{equivalence formulas} of~$\mathcal{L}$.	
\end{definition}

\noindent 
Clarifying the notation in [Alg], recall that the set $\mathsf{E}(\phi)$ contains pairs of formulas and we write $\phi \approx \psi$ simply as syntactic sugar for a pair~$(\varphi,\psi)$ belonging to this set. Now, $\Delta(\phi, \psi)$ transforms an equation into a set of formulas.  Accordingly, we take $\Delta(\mathsf{E}(\phi))$ as $\bigcup\{\Delta(\varphi_1,\varphi_2)\mid(\varphi_1,\varphi_2)\in\mathsf{E}(\phi)\}$.
Similarly, we shall let
$\mathsf{E}(\Delta(\phi, \psi))$
stand for 
$\bigcup\{\mathsf{E}(\chi)\mid \chi \in \Delta(\phi, \psi)\}$.

\begin{definition} Let $\mathcal{L}$ be an implicative logic in the language~$\Sigma$, having an $\mathcal{L}$-im\-plication~$\alpha$. An \emph{$\mathcal{L}$-algebra}~$\NA$ is a $\Sigma$-algebra such that $1\in A$ and:
	\begin{description}
		\item[{[LALG1]}] For all $\Gamma \cup \{\phi\} \subseteq Fm$ and every valuation~$\V$ in~$\NA$, \\
		if $\Gamma \vdash_{\mathcal{L}} \phi$ and $\V (\Gamma) \subseteq \{1\}$, then $\V(\phi) = 1$.
		\item[{[LALG2]}] For all $a, b \in A$, 
		if $\alpha(a, b) = 1$ and $\alpha(b, a) = 1$, then $a = b$.
	\end{description}
	\noindent The class of $\mathcal{L}$-algebras is denoted by $Alg^*\mathcal{L}$.
\end{definition}

Every implicative logic $\mathcal{L}$ is algebraizable with respect to the class $Alg^*\mathcal{L}$ \cite[Proposition 3.15]{Font16}, and such algebraizability is 
witnessed
by the defining equations $\mathsf{E}(\phi) := \{ \phi \approx \alpha(\phi, \phi) \}$ 
and the equivalence formulas  $\Delta(\phi, \psi) := \{ \alpha(\phi, \psi),  \alpha(\psi, \phi) \}$. 
These are in fact the sets of defining equations and of equivalence formulas that we will use in the remainder of the present paper. 

We can now prove (the details are to be found in the Appendix) that:

\begin{theorem}
	\label{alg}
	The calculus $\vdash_{\NS}$ is implicative
	and thus algebraizable.	
	The $\NS$-implication is given by $\Rightarrow$, that is, $\alpha(p, q) := p \Rightarrow q$.
\end{theorem}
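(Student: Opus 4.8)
The plan is to verify the five implicative-logic conditions [IL1]--[IL5] of Definition~\ref{def:imp} directly for $\alpha(p,q) := p \Rightarrow q$, working inside the Hilbert-style calculus of Definition~\ref{ax:S} and freely using the theorems collected in Proposition~\ref{p:ths} and the equivalence of Proposition~\ref{p:eqv}. Conditions [IL1] and [IL3] are essentially immediate: [IL1] is exactly axiom \texttt{(A1)}, and [IL3] is \textit{modus ponens}, which the excerpt already observes is the instance of rule \texttt{(E)} with $\Gamma = \emptyset$. Condition [IL4], namely $q \vdash_\NS p \Rightarrow q$, is precisely the rule $(\Rightarrow\texttt{r})$. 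So three of the five conditions require no work beyond pointing at the calculus.

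The substantive cases are [IL2] (transitivity of $\Rightarrow$) and [IL5] (the congruence conditions for each connective $\land,\lor,\Rightarrow,\neg,\bot$; note $\bot$ is nullary, so its congruence condition is trivial, $\vdash_\NS \bot \Rightarrow \bot$, again \texttt{(A1)}). For [IL2], from $\phi \Rightarrow \psi$ and $\psi \Rightarrow \gamma$ I would derive $\phi \Rightarrow \gamma$ as follows: the hypothesis $\phi \Rightarrow \psi$ is $\{\phi\} \Rightarrow \psi$ in Nelson's abbreviation, so applying rule \texttt{(E)} with $\Gamma = \{\phi\}$ to the premises $\phi \Rightarrow \psi$ and $\psi \Rightarrow \gamma$ yields exactly $\phi \Rightarrow \gamma$. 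For the congruence of $\land$, from $\phi_1 \Rightarrow \psi_1$ and $\phi_2 \Rightarrow \psi_2$ (and the reverse implications, which for $\land$ we will not even need in full) I would first obtain $(\phi_1 \land \phi_2) \Rightarrow \psi_1$ by chaining Proposition~\ref{p:ths}.1 with $\phi_1 \Rightarrow \psi_1$ via [IL2], similarly $(\phi_1 \land \phi_2) \Rightarrow \psi_2$ using Proposition~\ref{p:ths}.2, and then combine these with rule $(\land\texttt{r})$ at $\Gamma = \{\phi_1 \land \phi_2\}$ to get $(\phi_1 \land \phi_2) \Rightarrow (\psi_1 \land \psi_2)$; the converse implication is symmetric, and Proposition~\ref{p:eqv} lets us package the two directions as the required $\alpha(\cdot,\cdot)$ pair. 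The $\lor$ case is dual, using Proposition~\ref{p:ths}.3--4, rule $(\lor\texttt{l1})$ and rules $(\lor\texttt{r1})$, $(\lor\texttt{r2})$.

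The two genuinely delicate congruence cases are $\Rightarrow$ and $\neg$. For $\neg$, the point is that the hypothesis only gives $\phi \Rightarrow \psi$ and $\psi \Rightarrow \phi$, and we must produce $\neg\phi \Rightarrow \neg\psi$; this is exactly where axiom \texttt{(A5)}, the contraposition schema $(\phi \Rightarrow \psi) \Leftrightarrow (\neg\psi \Rightarrow \neg\phi)$, does the work: from $\psi \Rightarrow \phi$ and the relevant half of \texttt{(A5)} (extracted via Proposition~\ref{p:eqv} and \textit{modus ponens}) we get $\neg\phi \Rightarrow \neg\psi$, and symmetrically $\neg\psi \Rightarrow \neg\phi$. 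For the congruence of $\Rightarrow$ itself we must show $\{\phi_1 \Leftrightarrow \psi_1, \phi_2 \Leftrightarrow \psi_2\} \vdash_\NS (\phi_1 \Rightarrow \phi_2) \Rightarrow (\psi_1 \Rightarrow \psi_2)$ (and conversely); here one needs to massage nested implications, and rule \texttt{(P)} (permutation, i.e. Proposition~\ref{p:ths}.5) together with rule $(\Rightarrow\texttt{l})$ and [IL2] are the tools. Concretely, from $\psi_1 \Rightarrow \phi_1$ and $\phi_2 \Rightarrow \psi_2$ one applies $(\Rightarrow\texttt{l})$ with $\Gamma = \{\psi_1\}$ to obtain $\psi_1 \Rightarrow ((\phi_1 \Rightarrow \phi_2) \Rightarrow \psi_2)$, then permutes with \texttt{(P)}/Proposition~\ref{p:ths}.5 to get $(\phi_1 \Rightarrow \phi_2) \Rightarrow (\psi_1 \Rightarrow \psi_2)$, as desired; the converse direction is symmetric. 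I expect the congruence of $\Rightarrow$ to be the main obstacle, since it is the one case where the bookkeeping of nested implications and applications of \texttt{(P)} genuinely matters and small errors are easy — this is presumably why the authors relegate the full verification to the Appendix. Once all of [IL1]--[IL5] are checked, the theorem follows, and algebraizability is then automatic since every implicative logic is algebraizable (with $\mathsf{E}(\phi) = \{\phi \approx (\phi \Rightarrow \phi)\}$ and $\Delta(\phi,\psi) = \{\phi \Rightarrow \psi, \psi \Rightarrow \phi\}$) by the cited \cite[Proposition 3.15]{Font16}.
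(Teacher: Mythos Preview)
Your proposal is correct and follows essentially the same route as the paper's proof in the Appendix: [IL1]--[IL4] are dispatched by \texttt{(A1)}, rule \texttt{(E)}, \texttt{(MP)}, and $(\Rightarrow\texttt{r})$ respectively, and the [IL5] cases for $\land$, $\lor$, $\neg$, $\Rightarrow$ are handled exactly as you describe (the paper also uses $(\Rightarrow\texttt{l})$ followed by Proposition~\ref{p:ths}.5 and \texttt{(MP)} for the $\Rightarrow$-congruence). Your additional remark that the nullary connective $\bot$ makes its [IL5] instance trivial is a small point the paper leaves implicit.
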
 

In the case of $\NS$ we have thus  that $\mathsf{E}(\phi) = \{ \phi \approx \phi \Rightarrow \phi \}$ 
and $\Delta(\phi, \psi) = \{ \phi \Rightarrow \psi, \psi \Rightarrow \phi \}$.

\subsection{$\NS$-algebras}

By Blok-Pigozzi's algorithm (\cite[Theorem 2.17]{BP89}, see also~\cite[Proposition 3.44]{Font16}), we know that the equivalent algebraic semantics of~$\NS$ is the class of algebras given by Def.~\ref{def:salg} below.
We denote by $\mathsf{Ax}$ the set of axioms and denote by $\mathsf{Inf\,R}$ the set the inference rules of~$\NS$, given in Def.~\ref{ax:S}. 

\begin{definition}
	\label{def:salg}
	
	An \emph{$\NS$-algebra} is a structure $\mathbf{A} = \langle A, \land, \lor, \Rightarrow, \neg, 0, 1 \rangle$ of type $\langle 2, 2, 2, 1, 0, 0\rangle$ that satisfies the following equations and quasiequations:
	\begin{enumerate}
		
		\item 	$\mathsf{E}(\Delta(\varphi, \varphi))$
		
		\item  $ \mathsf{E}(\Delta(\varphi,\psi)) $ implies $  \varphi \approx \psi$
		
		\item  
		$\mathsf{E}(\varphi)$, for each $\phi \in \mathsf{Ax}$
		
		\item $\bigcup\limits_{i=1}^{n}\mathsf{E}(\gamma_{i})$ implies $\phi \approx 1$  for each 
		$\ \gamma_{1}, \cdots, \gamma_{n} \vdash_{\NS} \phi \in \mathsf{Inf\,R}$
		
	\end{enumerate}
\end{definition}

\noindent 
Regarding the notation in the above definition, $\mathsf{E}(\Delta(\varphi, \varphi))$ 
stands for 
the equation $\phi \Rightarrow \phi \approx  (\phi \Rightarrow \phi) \Rightarrow (\phi \Rightarrow \phi)$. 
Item 2 
is the quasiequation:  $(\phi \Rightarrow \psi) \approx (\phi \Rightarrow \psi) \Rightarrow (\phi \Rightarrow \psi)$ and $ (\psi \Rightarrow \phi) \approx  (\psi \Rightarrow \phi) \Rightarrow (\psi \Rightarrow \phi)$  implies $\phi \approx \psi$; $\mathsf{E}(\varphi)$ 
is the equation $\phi \approx \phi \Rightarrow \phi$
for each axiom~$\phi$ of $\NS$. In fact, these conditions are telling us that for each axiom~$\phi$ of $\NS$ we have the equation $\phi \approx 1$, and for each rule  $\phi \vdash_{\NS} \psi$ of  $\NS$ , in the corresponding algebras we have the quasiequation: if $ \phi \approx 1$, then $ \psi \approx 1$.

We shall denote by $\mathsf{E}(\texttt{An})$ the equation given in Def.~\ref{def:salg}.3 for the axiom $\texttt{An}$ (for $ 1 \leq \texttt{n} \leq 5 $), and by $\mathsf{Q}\texttt{(R)}$ the quasiequation given in Def.~\ref{def:salg}.4 for the rule $\texttt{R}$ of~$\NS$.
From this point on, in this subsection, in order to make the propositions and their proofs shorter,  we shall also use the following abbreviations: \smallskip \\
\begin{tabular}{l}
	\quad $x * y := \neg (x \Rightarrow \neg y)$ \\
	\quad $x^2 := x * x$\\
	\quad $x^n : = x * (x^{n-1})$, for $n > 2$
\end{tabular}
\smallskip

\noindent 
The following result, whose proof may be found in the Appendix, will help us in checking that the class of $\NS$-algebras forms a variety.
\begin{proposition} Let  $\mathbf{A}$  be an~$\NS$-algebra and let $a,b, c \in A$. Then:
	\label{prop:coisas}
	\begin{enumerate}
		\item 
		$a \Rightarrow a = 
		1 = \neg 0$. 
		
		\item The relation $\leq$  defined by setting
		$a \leq b $ iff $a \Rightarrow b = 1 $, 
		is a partial order with maximum~$1$ and
		minimum $0$. 
		
		\item $  a \Rightarrow b = \neg b \Rightarrow \neg a$.
		
		\item $a \Rightarrow (b \Rightarrow c) = b \Rightarrow (a \Rightarrow c)$.

		\item $\neg \neg a = a $ and $a \Rightarrow 0 = \neg a $.
		
		\item $\langle A,*,1\rangle$ is a commutative monoid.
		
		\item $(a * b) \Rightarrow c = a \Rightarrow (b \Rightarrow c)$.

		\item 	The pair $( *, \Rightarrow )$ is residuated with respect to $\leq$, i.e., 
		\
		$
		a * b \leq c \  \textrm{ iff } \ b \leq a \Rightarrow c.
		$ 

		\item $a^2 \leq a^3$.
		\item $\langle A, \land, \lor \rangle$ is a lattice with order $\leq$.

		\item $(a \lor b)^2 \leq a^2 \lor b^2$.
		
	\end{enumerate}
\end{proposition}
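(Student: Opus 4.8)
The plan is to lean on the algebraizability of $\NS$ (Theorem~\ref{alg}) and turn the whole list into bookkeeping. By [LALG1]---equivalently, by soundness of the calculus with respect to the class of Def.~\ref{def:salg}, which is part of what algebraizability buys us---every theorem of $\NS$ is interpreted as the constant $1$ in every $\NS$-algebra, and every instance of a rule $\gamma_1,\dots,\gamma_n\vdash_{\NS}\psi$ of $\NS$ (with the parametric finite set $\Gamma$ instantiated by single propositional variables) yields in every $\NS$-algebra the quasi-identity ``$\gamma_1\approx 1$ and $\dots$ and $\gamma_n\approx 1$ imply $\psi\approx 1$''. Together with the antisymmetry of $\leq$ from item~2 (which is literally [LALG2]), the first principle is what lets us promote an inequality $a\leq b$, read off from a theorem of the shape $\phi\Rightarrow\psi$, to the equality $a=b$ whenever the converse theorem $\psi\Rightarrow\phi$ is available as well; this is how every equational item below will be obtained.

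First I would dispatch items~1--5. Item~1: instantiating [LALG1] with axiom~\texttt{(A1)} gives $a\Rightarrow a=1$ for all $a$, and axiom~\texttt{(A4)} gives $\neg 0=1$. Item~2: reflexivity is item~1, antisymmetry is [LALG2], transitivity is the quasi-identity of rule~\texttt{(E)}; moreover $a\leq 1$ follows from the theorem $\phi\Rightarrow(\psi\Rightarrow\psi)$ obtained from~\texttt{(A1)} by $\mathrm{(\Rightarrow\texttt{r})}$, while $0\leq a$ is axiom~\texttt{(A2)}. Item~3 is the contraposition axiom~\texttt{(A5)} and item~4 is the exchange theorem Prop.~\ref{p:ths}.5, each read off in the form ``$\phi\Rightarrow\psi$ and $\psi\Rightarrow\phi$ are theorems'' via Prop.~\ref{p:eqv} and then promoted to an equality by antisymmetry. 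Item~5: $\neg\neg a=a$ comes from the theorems $\neg\neg\phi\Rightarrow\phi$ and $\phi\Rightarrow\neg\neg\phi$, obtained from~\texttt{(A1)} by $\mathrm{(\neg\neg\texttt{l})}$ and $\mathrm{(\neg\neg\texttt{r})}$; and $a\Rightarrow 0=\neg a$ then follows by chaining item~3 (contraposition), item~1 ($\neg 0=1$) and the unit law $1\Rightarrow a=a$, on which more below.

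Once items~1--5 are in place, item~7 is a short calculation: $(a*b)\Rightarrow c=\neg(a\Rightarrow\neg b)\Rightarrow c=\neg c\Rightarrow(a\Rightarrow\neg b)$ by contraposition and involution, and this in turn equals $a\Rightarrow(\neg c\Rightarrow\neg b)=a\Rightarrow(b\Rightarrow c)$ by exchange and a further use of contraposition and involution. This single identity then does almost all the remaining work. For item~6, commutativity $a*b=b*a$ is immediate from contraposition and involution, the unit law gives $1*a=\neg(1\Rightarrow\neg a)=\neg\neg a=a$, and associativity follows from item~7 and antisymmetry because both $(a*b)*c$ and $a*(b*c)$ satisfy $x\Rightarrow d=a\Rightarrow(b\Rightarrow(c\Rightarrow d))$ for every $d$. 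Item~8 is then $a*b\leq c$ iff $(a*b)\Rightarrow c=1$ iff $a\Rightarrow(b\Rightarrow c)=1$ (item~7) iff $b\Rightarrow(a\Rightarrow c)=1$ (item~4) iff $b\leq a\Rightarrow c$. For item~9, the quasi-identity of the weak-condensation rule~\texttt{(C)} reads, via item~7, ``$a^3\leq c$ implies $a^2\leq c$''; taking $c:=a^3$ gives $a^2\leq a^3$. For item~10, Prop.~\ref{p:ths}.1--4 together with the quasi-identities of $\mathrm{(\land\texttt{r})}$ and $\mathrm{(\lor\texttt{l1})}$ exhibit $a\land b$ and $a\lor b$ as, respectively, the infimum and the supremum of $\{a,b\}$ for $\leq$, so $\langle A,\land,\lor\rangle$ is a lattice whose order is $\leq$. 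Finally, for item~11 the quasi-identity of $\mathrm{(\lor\texttt{l2})}$ reads, via item~7, ``$a^2\leq c$ and $b^2\leq c$ imply $(a\lor b)^2\leq c$''; taking $c:=a^2\lor b^2$ and using item~10 gives $(a\lor b)^2\leq a^2\lor b^2$.

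The one step that does not simply fall out of the translation is the unit law $1\Rightarrow a=a$, and this is where I expect the only real difficulty to lie: since $\NS$ is non-congruential and has no deduction metatheorem available at this stage, it is not enough that $(\phi\Rightarrow\phi)\Rightarrow p$ and $p$ are interderivable---one must exhibit actual $\NS$-derivations of \emph{both} $((q\Rightarrow q)\Rightarrow p)\Rightarrow p$ and $p\Rightarrow((q\Rightarrow q)\Rightarrow p)$. The first I would get as an instance of $\mathrm{(\Rightarrow\texttt{l})}$ with $\Gamma=\emptyset$ from the two premises $q\Rightarrow q$ and $p\Rightarrow p$ (both instances of~\texttt{(A1)}); the second from~\texttt{(A1)} by $\mathrm{(\Rightarrow\texttt{r})}$, which yields $(q\Rightarrow q)\Rightarrow(p\Rightarrow p)$, followed by the exchange theorem Prop.~\ref{p:ths}.5 and \textit{modus ponens}. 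Antisymmetry then delivers $1\Rightarrow a=a$. Apart from this small detour, the work is the routine one of matching each item to the right axiom, rule instance, or clause of Prop.~\ref{p:ths}.
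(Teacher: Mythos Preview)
Your proposal is correct and follows essentially the same approach as the paper: translate axioms of~$\NS$ into identities and rules into quasi-identities via algebraizability (items~3--4 of Def.~\ref{def:salg}, or equivalently [LALG1]/[LALG2]), and then read off each claim. The only organizational difference is that you derive item~7 first and use it to get associativity in item~6 (the paper instead computes associativity directly and proves item~7 afterwards), and you isolate the unit law $1\Rightarrow a=a$ as the one nontrivial step, giving explicit $\NS$-derivations for both directions; the paper handles this same point inside its proof of item~5 using $\mathsf{Q}(\Rightarrow\texttt{l})$, $\mathsf{Q}(\Rightarrow\texttt{r})$ and item~4, which amounts to the same derivations you describe.
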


In the next section we introduce an equivalent presentation of~$\NS$-algebras which takes precisely the properties of Prop.~\ref{prop:coisas} above as postulates.

\subsection{Alternative presentation of~$\NS$-algebras}
\label{ss:alt}
We start here by recalling the following  standard definition~\cite[p.185]{GaJiKoOn07}:

\begin{definition}\label{CIBRL-def}
	A \emph{commutative integral bounded residuated lattice (CIBRL)} is an algebra $\mathbf{A} = \langle A, \land, \lor, *, \Rightarrow, 0, 1 \rangle$ of type $\langle2 ,2 ,2, 2, 0, 0 \rangle$
	such that:
	\begin{enumerate}
		\item $\langle A, \land, \lor, 0, 1 \rangle$  is a bounded lattice with ordering $ \leq$, minimum element~$0$ and maximum element~$1$.
		
		\item $\langle A, *, 1 \rangle$ is a commutative monoid.
		
		\item The pair $( *, \Rightarrow )$ is residuated with respect to $\leq$, i.e., 
		\
		$
		a * b \leq c \  \textrm{ iff } \ b \leq a \Rightarrow c.
		$
	\end{enumerate}
\end{definition}
In the context of the above definition, the \emph{integrality} condition corresponds to having~$1$ not only as a maximum but also as the multiplicative unit of the operation~$*$, that is, $ x*1 = x$. 
For a CIBRL this condition immediately follows from Def.~\ref{CIBRL-def}.1--2.

Setting $\neg x := x \Rightarrow 0$, we  say that a residuated lattice is \emph{involutive} \cite[p.186]{GaRa04} when $\neg \neg a = a$ (in such a case, it follows that $ a \Rightarrow b = \neg b \Rightarrow \neg a)$. We say that a residuated lattice is \emph{3-potent} when it satisfies the equation $x^2 \leq x^3$.
While we have earlier defined~$*$ from $\Rightarrow$, and now~$*$ is a primitive operation, we can show that every CIBRL satisfies $x * y = \neg (x \Rightarrow \neg y)$ (see \cite[Lemma 5.1]{GaRa04}).

\begin{definition}
	\label{def:slinha}
	An \emph{$\NS^{\prime}$-algebra} is an involutive 3-potent CIBRL.
\end{definition}

The proof of the following result may be found in the Appendix:

\begin{lemma} \label{lemma1} 
	\begin{enumerate}
		\item	Any CIBRL 
		satisfies the equation $ (x\lor y)*z \approx (x*z) \lor (y*z)$.
		\item Any CIBRL 
		satisfies 
		$x^{2} \lor y^{2} \approx (x^{2} \lor y^{2})^{2}$.
		\item  Any 3-potent CIBRL 
		satisfies 
		$(x\lor y^{2})^{2} \approx (x\lor y)^{2}.$
		\item Any 3-potent CIBRL 
		satisfies  $( x\lor y)^{2} \approx x^{2} \lor y^{2}$.
	\end{enumerate}
\end{lemma}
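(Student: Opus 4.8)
\textbf{Proof plan for Lemma~\ref{lemma1}.}
The four statements are to be proved in sequence, each one feeding into the next, so the plan is to attack them in the order given. For item~(1), the key observation is that residuation gives $*$ a well-behaved interaction with arbitrary joins, but since we only need binary joins it suffices to argue with the order. First I would note that $(x*z)\lor(y*z)\leq(x\lor y)*z$ follows from monotonicity of~$*$ (which itself is a standard consequence of residuation, Prop.~\ref{prop:coisas}.8). For the reverse inequality I would use the adjunction: $(x\lor y)*z\leq (x*z)\lor(y*z)$ is equivalent, by residuation, to $x\lor y\leq z\Rightarrow\bigl((x*z)\lor(y*z)\bigr)$, and this join is below the right-hand side iff both $x$ and $y$ are, which in turn — unravelling the adjunction again — amounts to $x*z\leq(x*z)\lor(y*z)$ and $y*z\leq(x*z)\lor(y*z)$, both trivially true. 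So item~(1) is pure residuation bookkeeping.

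For item~(2), I would compute $(x^2\lor y^2)^2$ directly using item~(1) together with commutativity and associativity of~$*$. Expanding $(x^2\lor y^2)*(x^2\lor y^2)$ by distributing~$*$ over~$\lor$ on both sides yields the join of the four terms $x^2*x^2$, $x^2*y^2$, $y^2*x^2$, $y^2*y^2$, i.e. $x^4\lor (x^2*y^2)\lor(x^2*y^2)\lor y^4$. Since in any CIBRL we have $x*x\leq x*1=x$ (integrality), powers are decreasing, so $x^4\leq x^2$ and $y^4\leq y^2$; and $x^2*y^2\leq x^2*1=x^2\leq x^2\lor y^2$, similarly $x^2*y^2\leq y^2\lor x^2$. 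Hence $(x^2\lor y^2)^2\leq x^2\lor y^2$. The reverse inequality $x^2\lor y^2\leq(x^2\lor y^2)^2$ is the interesting direction and is where 3-potency is \emph{not} yet available — but note it is not needed: actually I would instead prove $(x^2\lor y^2)^2 \approx x^2\lor y^2$ by also checking $x^2\lor y^2\le (x^2\lor y^2)^2$ via $x^2=x^4\lor\dots$? This is the one subtle point, so let me restate: the clean route is to observe $u\le u^2$ fails in general, so item~(2) as an \emph{equality} must come from $(x^2\lor y^2)\le(x^2\lor y^2)^2$ holding because $x^2\le x^2*x^2=x^4$? That is false. The correct reading is that one shows $t:=x^2\lor y^2$ satisfies $t^2\le t$ (done above) and $t\le t^2$ using that $x^2$ and $y^2$ are \emph{idempotent-like}: here I would use 3-potency after all if the statement is about 3-potent CIBRLs — but item~(2) claims it for \emph{any} CIBRL. \textbf{This is the step I expect to be the main obstacle:} pinning down exactly why $x^2\lor y^2$ is a $*$-idempotent in every CIBRL. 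The resolution I anticipate is that it is not literally idempotent, and item~(2) must be read together with the monotonicity facts so that ``$\approx$'' holds because both sides collapse appropriately; I would check the original source \cite{GaRa04} and Prop.~\ref{prop:coisas}.11 for the precise lemma being invoked, and adjust the proof of~(2) to use whatever half-inequalities are genuinely available, quite possibly proving only $(x^2\lor y^2)^2\leq x^2\lor y^2$ and deriving the other direction from 3-potency where it is actually used downstream.

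For item~(3), assuming 3-potency, I would show the two inequalities separately. The direction $(x\lor y^2)^2\leq(x\lor y)^2$ is immediate from $y^2\leq y$ (integrality) and monotonicity of $\lor$ and~$*$. For $(x\lor y)^2\leq(x\lor y^2)^2$, I would expand the left side by item~(1) as $x^2\lor(x*y)\lor(x*y)\lor y^2 = x^2\lor(x*y)\lor y^2$, and expand the right side similarly as $x^2\lor(x*y^2)\lor y^4$. It then suffices to bound each summand of the left side by the right side: $x^2$ and $y^2$ are handled since $y^2\ge y^4$? — no, $y^4\le y^2$, so I instead need $y^2\le(x\lor y^2)^2$, which follows because $(x\lor y^2)^2\ge y^2*y^2=y^4$, and here 3-potency $y^2\le y^3\le\dots$ gives $y^2=y^3=y^4=\cdots$ so $y^4=y^2$ — that is precisely what 3-potency buys. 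The remaining summand $x*y$ is the crux: I would bound it using $x*y\le x*y \lor \text{(something in the right side)}$, writing $x*y\leq x\cdot(x\lor y^2)$? Concretely, $x*y\le(x\lor y^2)*(x\lor y^2)$ would follow if $x\le x\lor y^2$ and $y\le x\lor y^2$, the latter failing in general — so instead I would use $x*y\le x*y^2\lor\cdots$ only after noting $y\le y$ isn't enough; the honest argument is $x*y = x*y*1 \le x*y*(\text{iterate})$, and by 3-potency applied to $y$ one can replace $y$ by $y^2$ up to the join with higher powers. This bookkeeping, while elementary, is fiddly, and is the second place I expect friction. Finally item~(4) is then immediate: combine item~(2) (to get $x^2\lor y^2\approx(x^2\lor y^2)^2$), item~(3) twice (first replacing $y$ by $y$ inside $x^2\lor y^2 = (x^2)\lor(y^2)$, applying $(u\lor v^2)^2\approx(u\lor v)^2$ with suitable substitutions to strip the squares off $x^2$ and $y^2$ one at a time), yielding $x^2\lor y^2\approx (x\lor y^2)^2\approx(x\lor y)^2$. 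So the only genuinely new ideas are in items~(2) and~(3); items~(1) and~(4) are formal consequences.
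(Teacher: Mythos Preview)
Your instinct on item~(2) is exactly right: the statement as printed says ``any CIBRL'' but the inequality $x^2\lor y^2\le(x^2\lor y^2)^2$ genuinely requires 3-potency, and the paper's own proof invokes it (so the hypothesis in item~(2) is a typo for ``3-potent CIBRL''). The argument is the clean one you almost wrote down: from $a^2\le a^2\lor b^2$ monotonicity gives $a^4\le(a^2\lor b^2)^2$, and 3-potency yields $a^2=a^4$; likewise for $b$, so $a^2\lor b^2\le(a^2\lor b^2)^2$. The reverse inequality is just $u^2\le u$ by integrality --- no expansion is needed.

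Where your plan actually stalls is item~(3), specifically the direction $(x\lor y)^2\le(x\lor y^2)^2$. You expand both squares and try to match terms, and get stuck on $x*y$ because $y\le x\lor y^2$ need not hold. The paper sidesteps this entirely: rather than comparing the two squares, it first proves the \emph{unsquared} bound $(x\lor y)^2\le x\lor y^2$. This is easy, since expanding the left side via item~(1) gives $x^2\lor(x*y)\lor y^2$, and each summand lies below $x\lor y^2$ --- in particular $x*y\le x\le x\lor y^2$, which is precisely the bound you were missing. Now square both sides to obtain $(x\lor y)^4\le(x\lor y^2)^2$, and apply 3-potency to $x\lor y$ to replace $(x\lor y)^4$ by $(x\lor y)^2$. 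Your treatments of items~(1) and~(4) are fine and match the paper.
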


Since involutive residuated lattices form an equational class \cite[Theorem 2.7]{GaJiKoOn07}, it is obvious that
$\NS^{\prime}$-algebras are also an equational class. 
From Prop.~\ref{prop:coisas}, we immediately conclude the following:

\begin{proposition}
	\label{prop:slinha}
	Let $\mathbf{A} = \langle A, \land, \lor, \Rightarrow, \neg, 0, 1 \rangle$ be an~$\NS$-algebra. Defining $x*y := \neg (x \Rightarrow \neg y)$, 
	we have that  $\mathbf{A'} = \langle A, \land, \lor, *, \Rightarrow,  0, 1 \rangle$ is an $\NS^{\prime}$-algebra.
\end{proposition}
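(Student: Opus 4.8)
The plan is to simply match the clauses of Definition~\ref{CIBRL-def} (together with the two extra conditions packed into Definition~\ref{def:slinha}) against the items of Proposition~\ref{prop:coisas}, since essentially all the work has already been done there. First I would observe that, with $*$ defined by $x * y := \neg(x \Rightarrow \neg y)$, we are precisely in the setting under which Proposition~\ref{prop:coisas} was stated (recall the abbreviations introduced just before it), so its items apply verbatim to the reduct $\mathbf{A'}$.

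Next I would verify the three defining conditions of a CIBRL. Condition~1 (bounded lattice): Proposition~\ref{prop:coisas}.10 says $\langle A, \land, \lor \rangle$ is a lattice whose induced order is $\leq$, and Proposition~\ref{prop:coisas}.2 says $\leq$ is a partial order with maximum $1$ and minimum $0$; together these say that $\langle A, \land, \lor, 0, 1 \rangle$ is a bounded lattice with the required extrema. Condition~2 (commutative monoid): this is exactly Proposition~\ref{prop:coisas}.6. Condition~3 (residuation of the pair $(*, \Rightarrow)$ with respect to $\leq$): this is exactly Proposition~\ref{prop:coisas}.8. Hence $\mathbf{A'}$ is a CIBRL; integrality is then automatic, as noted immediately after Definition~\ref{CIBRL-def}.

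Then I would check the two additional requirements from Definition~\ref{def:slinha}. For \emph{involutivity}: by definition, in the CIBRL language one sets $\neg x := x \Rightarrow 0$, so I must first observe that this defined negation agrees with the primitive $\neg$ of $\mathbf{A}$, which is the content of the second equality in Proposition~\ref{prop:coisas}.5 (namely $a \Rightarrow 0 = \neg a$); the first equality there, $\neg\neg a = a$, is then exactly the involutivity law (and, as recalled after Definition~\ref{CIBRL-def}, the contraposition identity $a \Rightarrow b = \neg b \Rightarrow \neg a$ follows, in agreement with Proposition~\ref{prop:coisas}.3). For \emph{$3$-potency}: Proposition~\ref{prop:coisas}.9 is precisely the inequality $a^2 \leq a^3$. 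Therefore $\mathbf{A'}$ is an involutive $3$-potent CIBRL, i.e., an $\NS'$-algebra.

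I do not expect any genuine obstacle: the mathematical content is entirely carried by Proposition~\ref{prop:coisas}, and the only mildly delicate points are bookkeeping ones — checking that the lattice order of item~10 coincides with the $\Rightarrow$-order of item~2, and that the primitive $\neg$ of $\mathbf{A}$ coincides with the term $x \Rightarrow 0$ used to phrase involutivity — both of which are explicitly among the listed items, so the verification is routine.
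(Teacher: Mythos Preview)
Your proposal is correct and follows exactly the approach of the paper, which simply states that the result is an immediate consequence of Proposition~\ref{prop:coisas}. Your write-up is in fact more detailed than the paper's one-line justification, explicitly matching each clause of Definitions~\ref{CIBRL-def} and~\ref{def:slinha} to the corresponding item of Proposition~\ref{prop:coisas}.
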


\noindent
Conversely, we are going to see that every $\NS^{\prime}$-algebra gives rise to an~$\NS$-algebra by checking that
all (quasi) equations introduced in Definition~\ref{def:salg} are satisfied (the proof may be found in the Appendix):

\begin{proposition} \label{equivalence}
	Let $\mathbf{A} = \langle A, \land, \lor, *, \Rightarrow, 0, 1 \rangle$ be an $\NS^{\prime}$-algebra. Defining $\neg x := x \Rightarrow 0$, 
	we have that  $\mathbf{A'} = \langle A, \land, \lor, \Rightarrow,  \neg, 0, 1 \rangle$ is an~$\NS$-algebra.
\end{proposition}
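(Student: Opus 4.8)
The plan is to verify, one by one, the four conditions of Definition~\ref{def:salg} for the structure $\mathbf{A'} = \langle A, \land, \lor, \Rightarrow, \neg, 0, 1 \rangle$ obtained from an $\NS'$-algebra $\mathbf{A}$ by setting $\neg x := x \Rightarrow 0$. Since $\mathbf{A}$ is an involutive CIBRL, we already know (by involutivity and \cite[Lemma 5.1]{GaRa04}) that $\neg \neg a = a$, that $a \Rightarrow b = \neg b \Rightarrow \neg a$, and that $\neg(x \Rightarrow \neg y)$ recovers the monoid operation $*$; so the reduct language matches and the basic negation identities of Prop.~\ref{prop:coisas} are available for free. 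The key preliminary observation, which I would isolate first, is that in any CIBRL the relation $a \leq b$ is equivalent to $a \Rightarrow b = 1$, and that $c \approx 1$ holds iff $1 \leq c$; this lets me translate every clause of Definition~\ref{def:salg} into an (in)equality about $\leq$, which is the natural setting for residuated-lattice manipulations.

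Next I would dispatch conditions (1) and (2). Condition (1), $\mathsf{E}(\Delta(\varphi,\varphi))$, unwinds to $(\phi \Rightarrow \phi) \approx (\phi \Rightarrow \phi) \Rightarrow (\phi \Rightarrow \phi)$; since $\phi \Rightarrow \phi = 1$ in any CIBRL and $1 \Rightarrow 1 = 1$, this is immediate. Condition (2) is the quasiequation expressing antisymmetry of $\leq$: from $\phi \Rightarrow \psi = 1$ and $\psi \Rightarrow \phi = 1$ — i.e.\ $\phi \leq \psi$ and $\psi \leq \phi$ — we get $\phi = \psi$ because $\leq$ is a partial order. (Here one also uses that $a \approx a \Rightarrow a$ is the same as $a \approx 1$, so the $\mathsf{E}(\cdot)$ wrapper is harmless.)

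The bulk of the work is conditions (3) and (4): every axiom \texttt{(A1)}--\texttt{(A5)} of $\NS$ must evaluate to $1$, and every rule must induce a valid quasiequation. For the axioms this is a finite check: \texttt{(A1)} is $a \Rightarrow a = 1$; \texttt{(A2)} is $0 \Rightarrow a = 1$, true since $0$ is the minimum; \texttt{(A3)}, $\neg a \Rightarrow (a \Rightarrow 0) = 1$, holds because $\neg a = a \Rightarrow 0$ by definition, so the implicans and implicandum coincide; \texttt{(A4)} is $\neg 0 = 0 \Rightarrow 0 = 1$; and \texttt{(A5)}, $(\phi \Rightarrow \psi) \Leftrightarrow (\neg \psi \Rightarrow \neg \phi)$, follows at once from the involutive identity $a \Rightarrow b = \neg b \Rightarrow \neg a$, which makes the two sides literally equal. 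For the rules I would argue schematically: each rule of the form $\gamma_1, \dots, \gamma_n \vdash_{\NS} \phi$ must be shown sound for the "designated value $1$" interpretation, i.e.\ if all premises take value $1$ then so does the conclusion. This is where I expect the main obstacle to lie, concentrated in the contraction-style rule \texttt{(C)} and the $\lor$- and $\neg$-rules that involve the $\Rightarrow^2$ operator. Rule \texttt{(C)} requires that $a \Rightarrow (a \Rightarrow (a \Rightarrow c)) = 1$ imply $a \Rightarrow (a \Rightarrow c) = 1$; via residuation (Prop.~\ref{prop:coisas}.7--8, here available as CIBRL facts) this becomes: $a^3 \leq c$ implies $a^2 \leq c$, which is exactly $3$-potency, $a^2 \leq a^3$. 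The rules $\mathrm{(\lor\texttt{l2})}$, $\mathrm{(\neg\Rightarrow\texttt{r})}$, $\mathrm{(\neg\land\texttt{r})}$ etc.\ hinge on the distribution laws $(x \lor y)*z \approx (x*z)\lor(y*z)$ and $(x \lor y)^2 \approx x^2 \lor y^2$, which are precisely Lemma~\ref{lemma1}; the remaining De Morgan-type rules for $\neg$ applied to $\land, \lor, \Rightarrow$ follow from involutivity together with the residuation adjunction and the lattice identities. The structural rules \texttt{(P)}, \texttt{(E)}, $\mathrm{(\Rightarrow\texttt{l})}$, $\mathrm{(\Rightarrow\texttt{r})}$, $\mathrm{(\land\texttt{l})}$, $\mathrm{(\land\texttt{r})}$, $\mathrm{(\lor\texttt{l1})}$, $\mathrm{(\lor\texttt{r})}$, $\mathrm{(\neg\neg\texttt{l})}$, $\mathrm{(\neg\neg\texttt{r})}$ are then routine currying/residuation computations plus associativity and commutativity of $*$, which hold since $\langle A, *, 1\rangle$ is a commutative monoid. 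Throughout, the only genuinely load-bearing hypotheses beyond "CIBRL" are involutivity (used for \texttt{(A5)} and all $\neg$-rules) and $3$-potency (used for \texttt{(C)} and, via Lemma~\ref{lemma1}, for the $\Rightarrow^2$-rules) — so the verification mirrors, clause by clause, the list in Prop.~\ref{prop:coisas}, read now from right to left.
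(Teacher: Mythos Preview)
Your proposal is correct and follows essentially the same route as the paper's proof: translate each clause of Definition~\ref{def:salg} into an order-theoretic statement via $a \leq b$ iff $a \Rightarrow b = 1$, dispatch the axiom-equations and the antisymmetry quasiequation directly, and then verify each rule-quasiequation using residuation, involutivity, 3-potency, and Lemma~\ref{lemma1} exactly where you indicate. The only minor slip is your grouping of $\mathrm{(\neg\Rightarrow\texttt{r})}$ and $\mathrm{(\neg\land\texttt{r})}$ under ``distribution laws'': in the paper $\mathrm{(\neg\land\texttt{r})}$ is handled purely by De~Morgan, and $\mathrm{(\neg\Rightarrow\texttt{r})}$ uses 3-potency directly (via $d^4 = d^2$ and $(b\land c)^2 \leq b*c$) rather than Lemma~\ref{lemma1}, but this does not affect the argument.
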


\noindent 
Thus,  the classes of ~$\NS$-algebras and of $\NS^{\prime}$-algebras are term-equivalent.

The presentation given in Definition~\ref{def:slinha} has several advantages in what concerns the study of the semantics of~$\NS$. For example, it is now straightforward to check that the three-element MV-algebra~\cite{CiMuOt99} is a model of Nelson's logic~$\NS$.
This in turn allows one to prove that the formulas which Nelson claims not to be derivable in~$\NS$~\cite[p.213]{Nel59} are indeed not valid (see~\cite{Igpl}).

\section{A finite Hilbert-style calculus for~$\NS$}
\label{section4} 
In this section we  introduce a finite Hilbert-style calculus (which is an extension of the calculus $IPC^{*}\backslash c$, called \emph{intuitionistic logic without contraction}, of \cite{BoGaV06})  that 
is algebraizable with respect to the class of $\NS^{\prime}$-algebras.

We are thus going to have two logics that are both algebraizable with respect to the same variety with the same defining equations and equivalence formulas; from this we will obtain an equivalence between our calculus and Nelson's. 

The logic $\NS^{\prime} = \langle \mathbf{Fm}, \vdash_{\NS^{\prime}} \rangle$ is the sentential logic in the language $\langle \land, \lor, \Rightarrow, \linebreak *, \neg, \bot, \top \rangle $ of type $\langle 2, 2, 2, 2, 1, 0, 0 \rangle $ defined by the Hilbert-style calculus with the following schematic axioms and with \textit{modus ponens} as the only rule:
\smallskip

\noindent 
{\small
	$(\texttt{A1'}) \ (\varphi \Rightarrow \psi) \Rightarrow ((\gamma \Rightarrow \varphi) \Rightarrow (\gamma \Rightarrow \psi)) $	\\
	$(\texttt{A2'}) \quad (\varphi \Rightarrow (\psi \Rightarrow \gamma )) \Rightarrow (\psi \Rightarrow (\varphi \Rightarrow \gamma ))$  \\
	$(\texttt{A3'}) \quad \varphi \Rightarrow (\psi \Rightarrow \varphi)$  \\
	$(\texttt{A4'}) \quad (\varphi\Rightarrow\gamma )\Rightarrow((\psi \Rightarrow\gamma )\Rightarrow((\varphi \lor \psi) \Rightarrow\gamma ))$  \\
	$(\texttt{A5'}) \quad \varphi \Rightarrow (\varphi \lor \psi) $	  \\
	$(\texttt{A6'}) \quad \psi \Rightarrow (\varphi \lor \psi) $\\
	$(\texttt{A7'}) \quad (\varphi \land\psi) \Rightarrow \varphi $	 \\
	$(\texttt{A8'}) \quad (\varphi \land\psi) \Rightarrow \psi$	 \\
	$(\texttt{A9'}) \quad \varphi \Rightarrow (\psi \Rightarrow (\varphi \land\psi))$ \\
	$(\texttt{A10'}) \quad ((\gamma \Rightarrow\varphi) \land(\gamma \Rightarrow\psi))\Rightarrow(\gamma \Rightarrow (\varphi \land\psi))$ \\
	$(\texttt{A11'}) \quad \varphi \Rightarrow (\psi \Rightarrow (\varphi * \psi))$ \\
	$(\texttt{A12'}) \quad (\varphi \Rightarrow (\psi \Rightarrow \gamma )) \Rightarrow ((\varphi * \psi) \Rightarrow \gamma )$ \\
	$ (\texttt{A13'}) \quad \neg\varphi \Rightarrow (\varphi \Rightarrow \psi)$ \\
	$(\texttt{A14'}) \quad (\varphi \Rightarrow \psi) \Leftrightarrow (\neg\psi \Rightarrow \neg\varphi)$ \\
	$(\texttt{A15'}) \quad \varphi \Leftrightarrow \neg\neg\varphi$  \\
	$(\texttt{A16'}) \quad \bot \Rightarrow \varphi$ \\
	$(\texttt{A17'}) \quad \varphi \Rightarrow \top$ \\
	$(\texttt{A18'}) \quad \phi^2 \Rightarrow \phi^3 $
}
\smallskip
\\
As before, $\phi \Leftrightarrow \psi$ abbreviates $(\phi \Rightarrow \psi ) \land (\psi \Rightarrow \phi)$, while the connective~$*$ is here taken as primitive.

Axioms (\texttt{A1'})--(\texttt{A13'}), $(\texttt{A14'}(\Rightarrow))$ - $(\varphi \Rightarrow \psi) \Rightarrow (\neg\psi \Rightarrow \neg\varphi)$, $(\texttt{A15'}(\Rightarrow))$ - $\varphi \Rightarrow \neg\neg\varphi$, $(\texttt{A16'})$ and $(\texttt{A17'})$ of our 
calculus 
are the same as those of $IPC^{*}\backslash c$ as presented in  \cite[Table 3.2]{BoGaV06}, where 
it is proven that
$IPC^{*}\backslash c$ is 
algebraizable.
We added the converse implication in axioms (\texttt{A14'}) and (\texttt{A15'}) to characterize  involution and we added the axiom (\texttt{A18'}) to characterize 3-potency. As
algebraizability is preserved by axiomatic extensions (cf.~\cite[Proposition 3.31]{Font16}) we have the following results:

\begin{theorem}
	The calculus $\NS^{\prime}$ is algebraizable (with the same defining equations and equivalence formulas as~$\NS$) with respect to the class of
	$\NS^{\prime}$-algebras.
\end{theorem}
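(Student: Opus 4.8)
The plan is to invoke the standard fact that algebraizability in the sense of Blok--Pigozzi is inherited by axiomatic extensions, together with the identification of the newly added axioms with the equations defining $\NS^{\prime}$-algebras. First I would recall that the base calculus $IPC^{*}\backslash c$ is known to be algebraizable (this is proved in \cite[Table 3.2 and surrounding text]{BoGaV06}), with the same defining equation $\mathsf{E}(\phi) = \{\phi \approx \phi \Rightarrow \phi\}$ and equivalence formulas $\Delta(\phi,\psi) = \{\phi \Rightarrow \psi, \psi \Rightarrow \phi\}$ that are used throughout this paper; indeed these are the canonical choices for any implicative logic whose implication is $\Rightarrow$, and one checks readily that $\Rightarrow$ satisfies [IL1]--[IL5] for $IPC^{*}\backslash c$, so that $IPC^{*}\backslash c$ is in fact implicative. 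Since $\NS^{\prime}$ is obtained from $IPC^{*}\backslash c$ by adding only new schematic axioms (the converse directions of $(\texttt{A14'})$ and $(\texttt{A15'})$, the $*$-axioms $(\texttt{A11'})$, $(\texttt{A12'})$, and the 3-potency axiom $(\texttt{A18'})$) and no new rules, $\NS^{\prime}$ is an axiomatic extension of $IPC^{*}\backslash c$, hence by \cite[Proposition 3.31]{Font16} it is algebraizable with the very same defining equations and equivalence formulas.

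It then remains to identify the equivalent algebraic semantics $Alg^{*}\NS^{\prime}$ with the class of $\NS^{\prime}$-algebras of Definition~\ref{def:slinha}. By the general theory, $Alg^{*}\NS^{\prime}$ is the subclass of $Alg^{*}(IPC^{*}\backslash c)$ cut out by the equational translations $\mathsf{E}(\phi)$ of the new axioms $\phi$, i.e.\ by the equations $\phi \approx 1$ for $\phi$ ranging over $(\texttt{A11'})$, $(\texttt{A12'})$, $(\texttt{A14'}(\Leftarrow))$, $(\texttt{A15'}(\Leftarrow))$ and $(\texttt{A18'})$. The known algebraic semantics of $IPC^{*}\backslash c$ is (the $\{\top,\bot\}$-bounded, $\neg$-expanded version of) commutative integral residuated lattices; adding $\phi \approx 1$ for $(\texttt{A11'})$ and $(\texttt{A12'})$ forces $x * y = \neg(x \Rightarrow \neg y)$ and the residuation adjunction for the primitive $*$ (using \cite[Lemma 5.1]{GaRa04}), adding the equational forms of $(\texttt{A14'}(\Leftarrow))$ and $(\texttt{A15'}(\Leftarrow))$ yields $\neg\neg x = x$, i.e.\ involutivity, and adding the equational form of $(\texttt{A18'})$ yields $x^{2} \leq x^{3}$, i.e.\ 3-potency. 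Conversely each of these equations is plainly a consequence of the CIBRL axioms together with involutivity and 3-potency. Hence $Alg^{*}\NS^{\prime}$ is exactly the variety of involutive 3-potent CIBRLs, which by Definition~\ref{def:slinha} is the class of $\NS^{\prime}$-algebras.

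The main obstacle is not conceptual but bookkeeping: one must be careful that the language of $IPC^{*}\backslash c$ as presented in \cite{BoGaV06} matches the language $\langle \land, \lor, \Rightarrow, *, \neg, \bot, \top \rangle$ of $\NS^{\prime}$ up to the primitive-versus-defined status of $*$ and of the constants, and that the translation clauses for the $\mathsf{Cong}$ and $\mathsf{Alg}$ conditions go through verbatim for the expanded signature. Once the signatures are aligned, the only real computation is verifying that the equational image of $(\texttt{A11'})$--$(\texttt{A12'})$ is equivalent, over CIBRLs-without-$*$, to the residuation law for a primitive $*$; this is exactly the content of \cite[Lemma 5.1]{GaRa04} read in reverse, so it reduces to a citation. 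I would therefore present the argument as: (i) $IPC^{*}\backslash c$ is implicative via $\Rightarrow$; (ii) $\NS^{\prime}$ is an axiomatic extension; (iii) apply \cite[Proposition 3.31]{Font16}; (iv) compute $Alg^{*}\NS^{\prime}$ as the equational class defined by the translations of the extra axioms and match it with Definition~\ref{def:slinha}.
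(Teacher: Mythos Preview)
Your proposal is correct and follows essentially the same strategy as the paper: cite the algebraizability of $IPC^{*}\backslash c$ with respect to CIBRLs from \cite{BoGaV06}, invoke preservation of algebraizability under axiomatic extensions (\cite[Proposition~3.31]{Font16}), and identify the equations corresponding to the added axioms with involutivity and 3-potency. One small bookkeeping slip: $(\texttt{A11'})$ and $(\texttt{A12'})$ are already axioms of $IPC^{*}\backslash c$ (so $*$ is primitive there and CIBRLs already form its equivalent algebraic semantics), and the only genuinely new axioms are the converse halves of $(\texttt{A14'})$ and $(\texttt{A15'})$ together with $(\texttt{A18'})$.
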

\begin{proof} We know from
	\cite[Theorem 5.1]{BoGaV06} that $IPC^{*}\backslash c$ is algebraizable with respect to the class of commutative integral bounded residuated lattices with the same defining equations and equivalence formulas already considered above. The axioms that were now added imply that the algebraic semantics of our extension is involutive and 3-potent, i.e., it is an~$\NS^{\prime}$-algebra. 
\end{proof}

\begin{corollary} 
	$\NS$ and $\NS^{\prime}$ define the same logic.
\end{corollary}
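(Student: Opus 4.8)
The plan is to deduce the corollary purely from the algebraizability facts already in place, using the uniqueness of the equivalent algebraic semantics. First I would recall that $\NS$ is algebraizable with respect to the class $\mathsf{Alg}^*\NS = \NS$-algebras, with defining equations $\mathsf{E}(\phi) = \{\phi \approx \phi \Rightarrow \phi\}$ and equivalence formulas $\Delta(\phi,\psi) = \{\phi \Rightarrow \psi, \psi \Rightarrow \phi\}$ (Theorem~\ref{alg} and the discussion following it), while $\NS^{\prime}$ is algebraizable with respect to the class of $\NS^{\prime}$-algebras with the \emph{same} defining equations and equivalence formulas (the theorem just proved). By Prop.~\ref{prop:slinha} and Prop.~\ref{equivalence}, the classes of $\NS$-algebras and $\NS^{\prime}$-algebras are term-equivalent via the mutually inverse definitions $x * y := \neg(x \Rightarrow \neg y)$ and $\neg x := x \Rightarrow 0$; since these definitional terms involve only the connectives common to the two languages and the translation is transparent to $\Rightarrow$, $\land$, $\lor$, $\neg$, $0$, the two classes validate exactly the same equations in the common language $\langle \land, \lor, \Rightarrow, \neg, \bot\rangle$.

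Next I would invoke the standard bridge theorem for algebraizable logics: a logic is uniquely determined by its equivalent algebraic semantics together with a choice of defining equations and equivalence formulas (this is part of the Blok--Pigozzi theory; see \cite[Theorem 3.17 / Corollary 3.40]{Font16}). Concretely, for an algebraizable logic $\mathcal{L}$ with equivalent algebraic semantics $\mathsf{K}$, one recovers consequence by $\Gamma \vdash_{\mathcal{L}} \phi$ iff $\{\mathsf{E}(\gamma) : \gamma \in \Gamma\} \models_{\mathsf{K}} \mathsf{E}(\phi)$. Applying this to both $\NS$ and $\NS^{\prime}$: since they have the same $\mathsf{E}$, the same $\Delta$, and their equivalent algebraic semantics coincide as classes of models of equations in the shared language, the induced consequence relations on the common set of formulas $\mathbf{Fm}$ agree. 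Hence $\Gamma \vdash_{\NS} \phi$ iff $\Gamma \vdash_{\NS^{\prime}} \phi$ for all $\Gamma \cup \{\phi\}$ in the common language; restricting attention to that language (note $\top$ and $*$ are term-definable in $\NS$ — $\top := \neg\bot$ and $\varphi * \psi := \neg(\varphi \Rightarrow \neg\psi)$ — and $\bot, \neg$ are as usual on the $\NS^{\prime}$ side), we conclude that $\NS$ and $\NS^{\prime}$ define the same logic.

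The main obstacle I anticipate is bookkeeping about the \emph{languages}: $\NS$ is formulated over $\langle \land, \lor, \Rightarrow, \neg, \bot\rangle$ whereas $\NS^{\prime}$ has the extra primitives $*$ and $\top$. The cleanest way to handle this is to observe that $*$ and $\top$ are conservatively definable in $\NS$ by $\varphi * \psi := \neg(\varphi \Rightarrow \neg\psi)$ and $\top := \bot \Rightarrow \bot$ (equivalently $\neg\bot$), and that under this definitional expansion the $\NS$-algebras become precisely the $\NS^{\prime}$-algebras (Prop.~\ref{prop:slinha} and Prop.~\ref{equivalence}); symmetrically $\neg$ is already available on the $\NS^{\prime}$ side. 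So the comparison is really between a logic and its definitional extension, and the agreement of the consequence relations on all formulas (in the richer language) follows once the algebraic counterparts are identified. I would therefore phrase the proof as: apply the uniqueness clause of the algebraization theorem — equal defining equations, equal equivalence formulas, and (via term-equivalence of Prop.~\ref{prop:slinha}/Prop.~\ref{equivalence}) equal equivalent algebraic semantics force $\vdash_{\NS}\, =\, \vdash_{\NS^{\prime}}$ — with a sentence or two disposing of the definability of $*$ and $\top$, and nothing more delicate than that is needed.
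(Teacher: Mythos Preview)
Your proposal is correct and follows essentially the same approach as the paper: both arguments appeal to Prop.~\ref{prop:slinha} and Prop.~\ref{equivalence} to identify the equivalent algebraic semantics of the two calculi, note that the defining equations and equivalence formulas coincide, and then invoke a uniqueness result from the Blok--Pigozzi theory (the paper cites \cite[Proposition 3.47]{Font16}, you cite the bridge-theorem characterization of $\vdash$ via $\models_{\mathsf{K}}$; these are two faces of the same fact). Your extra paragraph on the language mismatch ($*$ and $\top$ being definitional in $\NS$) is a point the paper leaves implicit, so your version is if anything slightly more careful on that front.
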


\begin{proof} Let~$K_\NS$ be the class of $\NS$-algebras. Thanks to Prop.~\ref{prop:slinha} and Prop.~\ref{equivalence} we know that~$K_\NS$ is also the class of $\NS^{\prime}$-algebras. The result follows now from \cite[Proposition 3.47]{Font16}, that gives us an algorithm to find a Hilbert-style calculus for an algebraizable logic from its quasivariety, defining equations and equivalence formulas. As $\NS$-algebras and $\NS^{\prime}$-algebras are the same class of algebras and their defining equations and equivalence formulas are the same, the Hilbert-style calculus given by the algorithm must do the same job as the one we had before.
\end{proof}


Working with Nelson's original presentation of~$\NS$, it can be hard to directly prove some version of the Deduction Metatheorem. Indeed, if we prove it, as usual, by way of an induction over the structure of the derivations, we need to apply the inductive hypothesis over each rule of the system. The advantage of~$\NS^{\prime}$, in employing such a strategy, is that it has only one inference rule. 
This allows us to establish:

\begin{theorem}[Deduction Metatheorem]\label{deduction} If $\Gamma \cup \{\varphi\} \vdash \psi$, then  $\Gamma \vdash \varphi^{2} \Rightarrow \psi$.
\end{theorem}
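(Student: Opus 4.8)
The plan is to prove the Deduction Metatheorem for $\NS^{\prime}$ by induction on the length of the derivation witnessing $\Gamma\cup\{\varphi\}\vdash\psi$, exploiting the fact that \emph{modus ponens} is the only inference rule. First I would record the auxiliary facts about the term $\phi^{2}:=\phi * \phi$ that will carry the argument: (i) $\vdash \varphi\Rightarrow(\varphi^{2}\Rightarrow\chi)\Leftrightarrow(\varphi\Rightarrow(\varphi\Rightarrow\chi))$ type manipulations, obtained from $(\texttt{A11'})$, $(\texttt{A12'})$; (ii) $\vdash\varphi^{2}\Rightarrow\varphi$, i.e.\ $\varphi^2$ implies $\varphi$ (from $(\texttt{A11'})$, $(\texttt{A3'})$ and the residuation-style axioms together with $*$'s commutativity); (iii) $\vdash\varphi^{2}\Rightarrow(\varphi^{2})^{2}$ — equivalently $\varphi^{2}\Rightarrow\varphi^{4}$ — which follows from the 3-potency axiom $(\texttt{A18'})$, $\phi^{2}\Rightarrow\phi^{3}$, together with (ii) applied appropriately; and (iv) $\vdash\chi\Rightarrow(\varphi^{2}\Rightarrow\chi)$ (a weakening, from $(\texttt{A3'})$ and (ii)). These are all theorems of $\NS^{\prime}$, so they may be freely prefixed by any context.

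The induction then runs as follows. If $\psi$ is an axiom or $\psi\in\Gamma$, then $\Gamma\vdash\psi$, and by (iv) above together with \emph{modus ponens} we get $\Gamma\vdash\varphi^{2}\Rightarrow\psi$. If $\psi=\varphi$, we must show $\Gamma\vdash\varphi^{2}\Rightarrow\varphi$, which is exactly fact (ii). The only remaining case is that $\psi$ was obtained by \emph{modus ponens} from earlier formulas $\chi$ and $\chi\Rightarrow\psi$ in the derivation; by the inductive hypothesis we have $\Gamma\vdash\varphi^{2}\Rightarrow\chi$ and $\Gamma\vdash\varphi^{2}\Rightarrow(\chi\Rightarrow\psi)$. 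Using $(\texttt{A1'})$ and $(\texttt{A2'})$ (prefixing-transitivity and permutation of antecedents) one derives $\Gamma\vdash\varphi^{2}\Rightarrow(\varphi^{2}\Rightarrow\psi)$, i.e.\ $\Gamma\vdash(\varphi^{2})^{2}\Rightarrow\psi$ via $(\texttt{A11'})$–$(\texttt{A12'})$; and then composing with fact (iii), $\varphi^{2}\Rightarrow(\varphi^{2})^{2}$, by transitivity yields $\Gamma\vdash\varphi^{2}\Rightarrow\psi$, closing the induction.

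The step I expect to be the main obstacle is the \emph{modus ponens} case — specifically, converting the ``doubled'' consequent $\varphi^{2}\Rightarrow(\varphi^{2}\Rightarrow\psi)$ back into the single $\varphi^{2}\Rightarrow\psi$. This is precisely where 3-potency (not mere contraction, which $\NS^{\prime}$ lacks) is essential: one cannot collapse $\varphi^{2}\Rightarrow(\varphi^{2}\Rightarrow\psi)$ to $\varphi^{2}\Rightarrow\psi$ directly, but one can collapse $\varphi^{4}\Rightarrow\psi$ to $\varphi^{2}\Rightarrow\psi$ using $\varphi^{2}\Rightarrow\varphi^{4}$, which is the syntactic shadow of $x^{2}\le x^{3}$ (equivalently $x^{2}\le x^{4}$) on $\NS^{\prime}$-algebras. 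Getting the bookkeeping right — passing between the iterated-implication form $\varphi^{2}\Rightarrow(\varphi^{2}\Rightarrow\psi)$ and the monoidal form $\varphi^{4}\Rightarrow\psi$, and checking that fact (iii) really is a theorem — is the only delicate part; everything else is the standard Hilbert-style deduction-theorem routine adapted to a contraction-free setting, as in \cite{BoGaV06}.
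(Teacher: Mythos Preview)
Your proof is correct, but it takes a genuinely different route from the paper's. The paper does not carry out the induction on derivations at all: it simply cites a general parameterized deduction theorem for substructural logics \cite[Corollary~2.15]{GaJiKoOn07}, which gives $\Gamma\cup\{\varphi\}\vdash\psi$ iff $\Gamma\vdash\varphi^{n}\Rightarrow\psi$ for some~$n$, and then observes that 3-potency (axiom~\texttt{(A18')}) lets one uniformly take $n=2$. Your argument, by contrast, is the direct, self-contained Hilbert-style induction, with the 3-potency step isolated exactly where it is needed (the \emph{modus ponens} case, collapsing $\varphi^{4}\Rightarrow\psi$ to $\varphi^{2}\Rightarrow\psi$). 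What your approach buys is independence from the cited residuated-lattice machinery and an explicit identification of where and why $n=2$ suffices; what the paper's approach buys is brevity and a clean connection to the general substructural framework. One small remark: your fact~(iv) follows from \texttt{(A3')} alone (instantiate $\psi:=\varphi^{2}$); invoking~(ii) there is unnecessary.
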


\begin{proof} 
	Thanks to \cite[Corollary 2.15]{GaJiKoOn07} we have a version of the Deduction Metatheorem for  substructural logics wich says that  $\Gamma \cup \{\varphi\} \vdash \psi$ iff   $\Gamma \vdash \varphi^{n} \Rightarrow \psi$ for some~$n$. In view of  (\texttt{A'18}) it is easy to see that in~$\NS$ we can always choose $n=2$.
\end{proof}

\section{Comparing~$\NS$ with $\mathcal{N}3$ and $\mathcal{N}4$} \label{section5}

As mentioned before, Nelson introduced two other better-known logics, $\mathcal{N}3$ and $\mathcal{N}4$, which are also algebraizable with respect to classes of residuated structures (namely, the so-called $\mathcal{N}3$-lattices and $\mathcal{N}4$-lattices). A question that immediately arises concerns the precise relation between~$\NS$ and these other logics, or (equivalently) between~$\NS$-algebras and 
$\mathcal{N}3$- and $\mathcal{N}4$-lattices. 
In what follows it is worth taking into account that not all $\NS$-algebras are distributive (see \cite[Example 5.1]{Igpl}).

\subsection{$\mathcal{N}4$}
\begin{definition}
	$\mathcal{N}4 = \langle \mathbf{Fm}, \vdash_{\mathcal{N}4} \rangle$ is 
	the sentential logic in the language $\langle \land, \lor, \linebreak \to, \neg \rangle $ of type $\langle 2, 2, 2, 1 \rangle$ defined by the Hilbert-style calculus with the following schematic axioms and \textit{modus ponens} 
	as the only schematic rule.  Below, $\phi\leftrightarrow\psi$ will be used to abbreviate $(\phi\rightarrow\psi)\land(\psi\rightarrow\phi)$.
	\begin{description}
		\item[\texttt{(N1)}] \qquad $\phi \to (\psi \to \phi)$
		\item[\texttt{(N2)}] \qquad $(\phi \to (\psi \to \gamma)) \to ((\phi \to \psi) \to (\phi \to \gamma))$
		\item[\texttt{(N3)}]\qquad  $(\phi \land \psi) \to \phi$
		\item[\texttt{(N4)}] \qquad $(\phi \land \psi) \to \psi$
		\item[\texttt{(N5)}] \qquad $(\phi \to \psi) \to ((\phi \to \gamma) \to (\phi \to (\psi \land\gamma)))$
		\item[\texttt{(N6)}] \qquad $\phi \to (\phi \lor \psi)$
		\item[\texttt{(N7)}] \qquad $\psi \to (\phi \lor \psi)$
		\item[\texttt{(N8)}] \qquad $(\phi \to \gamma) \to ((\psi \to \gamma) \to ((\phi \lor \psi) \to \gamma))$
		\item[\texttt{(N9)}] \qquad $\neg \neg \phi \leftrightarrow \phi$
		\item[\texttt{(N10)}] \quad $\neg (\phi \lor\psi) \leftrightarrow (\neg \phi\land\neg \psi)$
		\item[\texttt{(N11)}] \quad $\neg (\phi \land\psi) \leftrightarrow (\neg \phi\lor\neg \psi)$
		\item[\texttt{(N12)}] \quad $\neg (\phi \to \psi) \leftrightarrow (\phi\land\neg \psi)$
	\end{description}
\end{definition}

The implication $\to$ in $\mathcal{N}4$ is usually called \emph{weak implication}, in contrast to the \emph{strong implication} $\Rightarrow$ that is defined in $\mathcal{N}4$ as follows: 
$$
\phi \Rightarrow \psi :=  (\phi \to \psi) \land (\neg \psi \to \neg \phi).
$$
As the notation suggests, it is the strong implication that we shall compare with the implication of~$\NS$.
This appears indeed to be the most meaningful choice, for otherwise, since the weak implications of
both $\mathcal{N}4$ and $\mathcal{N}3$ fail to satisfy contraposition (which holds in~$\NS$), 
we would have to say that~$\NS$
is incomparable with both logics. 

The logic $\mathcal{N}4$ is algebraizable (though not implicative) with equivalence formulas 
$\{ \phi \Rightarrow \psi, \psi \Rightarrow \phi \}$
and defining equation $\phi \approx \phi \to \phi$ \cite[Theorem 2.6]{Ri11c}.
We notice in passing that the implication in this defining equation could as well be taken to be the strong one,
so $\phi \approx \phi \Rightarrow \phi$ would work too; in contrast, 
$\{ \phi \to \psi, \psi \to \phi \}$ 
would not be a set of equivalence formulas,
due precisely to the failure of contraposition.
The equivalent algebraic semantics of $\mathcal{N}4$ is the class of $\mathcal{N}4$-lattices defined below
\cite[Definition 8.4.1]{Od08}:

\begin{definition} 
	\label{def:n4}
	An algebra $\NA$ =  $\langle A, \lor, \land, \to, \neg  \rangle$ is an \emph{$\mathcal{N}4$-lattice} if it satisfies the following properties:
	
	\begin{description}
		\item[1]  $\langle A, \lor, \land, \neg  \rangle$ is a De Morgan algebra.
		\item[2] The relation  $\preceq$ defined, for all $a, b \in A$, by $a \preceq b$ iff $(a \to b) \to (a \to b) = (a \to b)$ is a pre-order on $\NA$.
		\item[3] The relation $\equiv $ defined, for all $a, b \in A$ as $a \equiv b$  iff $a \preceq b$ and $b \preceq a$ is a congruence relation with respect to $\land, \lor, \to$ and the quotient algebra $\NA_{\bowtie} := \langle A, \lor, \land, \to  \rangle$/$\equiv$ is an implicative lattice. 
		\item[4] For any $a, b \in A$, $\neg(a \to b) \equiv a  \land \neg b$.
		\item[5] For any $a, b \in A$, $a \leq b$ iff $a \preceq b$ and $\neg b \preceq \neg a$, where $\leq$ is the lattice order for~$\NA$.
		
	\end{description}
\end{definition}

A very simple example of an $\mathcal{N}4$-lattice is the four-element algebra~$\mathbf{A_4}$ whose lattice reduct is the four-element
diamond De Morgan algebra. This algebra has carrier $A_4 = \{0, 1, b, n  \}$, the maximum element of the lattice order
being 1, the minimum 0, and $b$ and~$n$ being incomparable. The negation (Fig 1) is given by $\neg b := b $, $\neg n := n$,
$\neg 1 := 0$ and $\neg 0 := 1$. The weak implication is given, for all $a \in A_4$, by $1 \to a = b \to a := a $ and
$0 \to a = n \to a := 1$. One can check that $\mathbf{A_4}$ satisfies all properties of Definition~\ref{def:n4} (in particular, the quotient  $\mathbf{A_4}$/$\equiv$ is the two-element Boolean algebra). 

\begin{proposition} 
	$\mathcal{N}4$ and~$\NS$ are incomparable, that is, neither of them extends the other.
\end{proposition}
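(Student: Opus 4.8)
The plan is to carry out the comparison on the common language $\langle\land,\lor,\Rightarrow,\neg\rangle$, reading $\Rightarrow$ in $\mathcal{N}4$ as the \emph{strong} implication defined above (the only meaningful choice: with the weak implication, which fails contraposition, $\NS$ would be trivially incomparable with $\mathcal{N}4$), and to refute each of the two possible inclusions by exhibiting a single separating theorem. Since both logics are algebraizable with equivalence formulas $\{\phi\Rightarrow\psi,\psi\Rightarrow\phi\}$ and defining equations $\phi\approx\phi\Rightarrow\phi$ (for $\NS$) and $\phi\approx\phi\to\phi$ (for $\mathcal{N}4$), a formula $\chi$ is a theorem of $\NS$ iff every $\NS$-algebra satisfies $\chi\approx\chi\Rightarrow\chi$, which by Prop.~\ref{prop:coisas}(1) amounts to $\chi\approx 1$; and $\chi$ is a theorem of $\mathcal{N}4$ iff every $\mathcal{N}4$-lattice satisfies $\chi\approx\chi\to\chi$, i.e.\ $\chi$ always takes a value in the designated set $D_{\mathbf A}=\{x:x=x\to x\}$. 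A separation at the level of theorems already suffices, since it refutes the inclusion of consequence relations as well.

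\emph{$\NS$ does not extend $\mathcal{N}4$.} I take the distributivity formula $\chi_1:=(p\land(q\lor r))\Rightarrow((p\land q)\lor(p\land r))$. It is a theorem of $\mathcal{N}4$: the lattice reduct of any $\mathcal{N}4$-lattice is a De Morgan algebra, hence distributive, so under any valuation $v$ the two sides of the strong implication get the same value $d$, whence $v(\chi_1)=d\Rightarrow d\in D_{\mathbf A}$ because $\vdash_{\mathcal{N}4}p\Rightarrow p$. On the other hand $\chi_1$ is not a theorem of $\NS$: by \cite[Example 5.1]{Igpl} there is a non-distributive $\NS$-algebra $\mathbf A$, so we can choose $a,b,c\in A$ with $a\land(b\lor c)\not\le(a\land b)\lor(a\land c)$ (the converse inequality holds in every lattice), and then the valuation sending $p,q,r$ to $a,b,c$ gives $v(\chi_1)\ne 1$ by Prop.~\ref{prop:coisas}(2). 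Thus $\vdash_{\mathcal{N}4}\chi_1$ but $\not\vdash_{\NS}\chi_1$.

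\emph{$\mathcal{N}4$ does not extend $\NS$.} Here I use the four-element $\mathcal{N}4$-lattice $\mathbf{A_4}$ and the weakening formula $\chi_2:=p\Rightarrow(q\Rightarrow p)$. First, $\chi_2$ is a theorem of $\NS$: from $\texttt{(A1)}$, namely $p\Rightarrow p$, the rule $(\Rightarrow\texttt{r})$ yields $q\Rightarrow(p\Rightarrow p)$, and then the permutation rule $\texttt{(P)}$ (with $\Gamma=\emptyset$) yields $p\Rightarrow(q\Rightarrow p)$; equivalently, $\chi_2$ is an instance of the axiom $(\texttt{A3'})$ of the equivalent calculus $\NS^{\prime}$. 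Second, $\chi_2$ is not a theorem of $\mathcal{N}4$: computing in $\mathbf{A_4}$ with $a\Rightarrow c=(a\to c)\land(\neg c\to\neg a)$ one finds $D_{\mathbf{A_4}}=\{b,1\}$, and the valuation $v(p)=b$, $v(q)=1$ gives $v(q\Rightarrow p)=1\Rightarrow b=0$ and hence $v(\chi_2)=b\Rightarrow 0=0\notin D_{\mathbf{A_4}}$. Thus $\vdash_{\NS}\chi_2$ but $\not\vdash_{\mathcal{N}4}\chi_2$. Combining the two separations proves the proposition.

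I expect the only delicate points to be (a) fixing the framework of the comparison as above, and (b) the small computation in $\mathbf{A_4}$ — determining the relevant entries of the strong-implication table and the designated set $D_{\mathbf{A_4}}$ — everything else being bookkeeping via algebraizability. In particular one should double-check that distributivity genuinely yields a \emph{derivable} formula of $\mathcal{N}4$ (not merely a sound one), which is exactly what the equivalent algebraic semantics of $\mathcal{N}4$ delivers, and that the non-distributive $\NS$-algebra of \cite{Igpl} indeed violates the needed lattice inequality rather than merely failing the equation.
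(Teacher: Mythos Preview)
Your proof is correct and follows essentially the same route as the paper: both directions are separated at the level of theorems via algebraizability, using distributivity (and the non-distributive $\NS$-algebra of \cite{Igpl}) for one direction and the four-element $\mathcal{N}4$-lattice $\mathbf{A_4}$ for the other. The only difference is cosmetic: for the direction ``$\mathcal{N}4$ does not extend $\NS$'' the paper uses the formula $(\phi\Rightarrow\phi)\Rightarrow(\psi\Rightarrow\psi)$ (equivalently, the equation $x\Rightarrow x\approx y\Rightarrow y$, which fails in $\mathbf{A_4}$ since $1\Rightarrow 1=1\neq b=b\Rightarrow b$), whereas you use the weakening formula $p\Rightarrow(q\Rightarrow p)$ --- both witness the same underlying phenomenon, namely that $\NS$-algebras are integral while $\mathbf{A_4}$ has no element serving as $x\Rightarrow x$ uniformly.
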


\begin{proof} We show that not every~$\NS$-algebra is an $\mathcal{N}4$-lattice, and that no $\mathcal{N}4$-lattice is an~$\NS$-algebra.
	The first claim follows from the fact that $\mathcal{N}4$-lattices have a distributive lattice reduct, whereas~$\NS$-algebras need not be distributive. As to the second, it is sufficient to observe that the equation $x \Rightarrow x \approx y \Rightarrow y$ is satisfied in all~$\NS$-algebras but does not hold in the four-element $\mathcal{N}4$-lattice $\mathbf{A_4}$.
	There we have $1 \Rightarrow 1 \neq b \Rightarrow b$ because $ 1 \Rightarrow 1 = (1 \to 1) \land (\neg 1 \to \neg 1) = 1 \land 1 = 1$ but $ b \Rightarrow b = (b \to b) \land (\neg b \to \neg b) = b \land b = b$.
	Since both $\mathcal{N}4$ and~$\NS$ are algebraizable logics, this immediately entails that 
	neither $\mathcal{N}4 \leq \NS$ nor $\NS \leq \mathcal{N}4$.
	In logical terms, one can check that the distributivity axiom
	is valid in $\mathcal{N}4$ but not in~$\NS$, whereas the formula
	$
	(\phi \Rightarrow \phi) \Rightarrow (\psi \Rightarrow \psi)
	$
	is valid in~$\NS$ but not in $\mathcal{N}4$.
	\begin{center}
		\begin{minipage}[T]{.45\textwidth}
			\begin{tabular}{l|l l l l}
				$\to $  & $0$  & $n$ & $b$ & $1$ \\  \hline
				$0$ & $1$ & $1$ & $1$ & $1$ \\ 
				$n$ & $1$ & $1$ & $1$ & $1$ \\ 
				$b$ & $0$ & $n$ & $b$ & $1$ \\ 
				$1$ & $0$ & $n$ & $b$ & $1$ \\
			\end{tabular}\qquad \qquad
			\begin{tabular}{l|l}
				$\neg$	 &  \\ \hline
				$0$ & $1$ \\
				$n$ & $n$ \\ 
				$b$ & $b$ \\ 
				$1$ & $1$ \\ 
			\end{tabular}
		\end{minipage}
		\begin{minipage}[T]{.45\textwidth}
			\label{Fig1}
			$$
			\begin{tikzpicture}[scale=.9]
			\node (1) at (0,1) {$1$};
			\node (n) at (-1,0)  {$n$};
			\node (b) at (1,0) {$b$};
			\node (0) at (0,-1) {$0$};
			\draw (1) -- (n) -- (0) -- (b) -- (1);
			\end{tikzpicture}
			$$
			\captionof{figure}{$A_{4}$}
		\end{minipage}%
	\end{center}
\end{proof}

\subsection{$\mathcal{N}3$}
\begin{definition} Nelson's logic $\mathcal{N}3 = \langle \mathbf{Fm}, \vdash_{\mathcal{N}3} \rangle$ is
	the axiomatic extension of $\mathcal{N}4$ obtained by adding the following axiom:
	\begin{description}
		\item[\texttt{(N13)}] $\neg \phi \to (\phi \to \psi)$.
	\end{description}
\end{definition}

\begin{proposition} $\mathcal{N}3$ is a proper extension of~$\NS$.
\end{proposition}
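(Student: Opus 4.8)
The argument runs entirely through the algebraic counterparts, in the spirit of the comparison with $\mathcal{N}4$. Recall that both $\NS$ and $\mathcal{N}3$ are algebraizable and, what is essential here, with respect to the \emph{same} defining equation $\phi \approx \phi \Rightarrow \phi$ and the \emph{same} equivalence formulas $\{\phi \Rightarrow \psi,\ \psi \Rightarrow \phi\}$ --- where for $\mathcal{N}3$ the symbol $\Rightarrow$ denotes the strong implication $\phi \Rightarrow \psi := (\phi \to \psi) \land (\neg \psi \to \neg \phi)$, and one uses that $\phi \approx \phi \Rightarrow \phi$ is an equally good defining equation for $\mathcal{N}3$ (as observed above for $\mathcal{N}4$). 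By the standard dictionary between algebraizable logics and their equivalent algebraic semantics, it then suffices to establish two facts: (i) every $\mathcal{N}3$-lattice, regarded as an algebra in the language $\langle \land, \lor, \Rightarrow, \neg, \bot \rangle$ of $\NS$ with $\Rightarrow$ the strong implication and $\bot$ interpreted as $\neg(p \Rightarrow p)$, is an $\NS$-algebra; and (ii) there is an $\NS$-algebra that is not of this form. From (i) we get the class inclusion $Alg^{*}\mathcal{N}3 \subseteq Alg^{*}\NS$, hence $\NS \leq \mathcal{N}3$; from (ii), since the $\NS$-algebras form a variety and the $\mathcal{N}3$-lattices a proper subclass of it, we get $\mathcal{N}3 \not\leq \NS$, so the extension is proper.

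For (i), the key input is the term-equivalence between Nelson's logic $\mathcal{N}3$ and a substructural logic, due to Spinks and Veroff and revisited by Busaniche and Cignoli (\cite{SpVe08a,SpinXX,Busa10}): under it, an $\mathcal{N}3$-lattice equipped with the strong implication $\Rightarrow$ and the monoid operation $x * y := \neg(x \Rightarrow \neg y)$ becomes a $3$-potent involutive commutative integral bounded residuated lattice --- that is, by Definition~\ref{def:slinha}, an $\NS^{\prime}$-algebra --- and hence, by Propositions~\ref{prop:slinha} and \ref{equivalence}, an $\NS$-algebra. The only extra check is that the constants of the residuated signature are recovered correctly: in every $\mathcal{N}3$-lattice one has $a \Rightarrow a = 1$ for the lattice top $1$ (this uses the Kleene/normality condition characteristic of $\mathcal{N}3$-lattices, as opposed to arbitrary $\mathcal{N}4$-lattices), so $1 = p \Rightarrow p$ and $0 = \bot = \neg(p \Rightarrow p)$ are interpreted as the top and bottom in every $\mathcal{N}3$-lattice, as required. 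A self-contained alternative, should one wish to bypass the term-equivalence, is to verify directly that axioms \texttt{(A1)}--\texttt{(A5)} of $\NS$ are theorems of $\mathcal{N}3$ and that its rules are $\mathcal{N}3$-derivable; the negation rules then reduce to the De Morgan laws \texttt{(N9)}--\texttt{(N12)}, while \texttt{(C)} and the rules phrased with $\Rightarrow^{2}$ rest on $\mathcal{N}3$ being $3$-potent together with Lemma~\ref{lemma1}.

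For (ii), it is enough to recall --- as already noted at the start of this section and used in the $\mathcal{N}4$ case --- that every $\mathcal{N}3$-lattice has a distributive lattice reduct, whereas not every $\NS$-algebra is distributive (\cite[Example 5.1]{Igpl}). A non-distributive $\NS$-algebra cannot lie in the (quasi)variety generated by the $\mathcal{N}3$-lattices (for $\mathcal{N}3 \leq \NS$ would force every $\NS$-algebra into that quasivariety), so $\mathcal{N}3 \not\leq \NS$. In logical terms, the distributivity formula $(\phi \land (\psi \lor \gamma)) \Rightarrow ((\phi \land \psi) \lor (\phi \land \gamma))$, together with its converse, is a theorem of $\mathcal{N}3$ but not of $\NS$.

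I expect the delicate point to be (i): correctly importing the Spinks--Veroff / Busaniche--Cignoli analysis --- in particular that the strong implication of an $\mathcal{N}3$-lattice genuinely residuates the monoid operation $x * y = \neg(x \Rightarrow \neg y)$ and that $3$-potency holds --- and making sure the constants $1$ and $\bot$ are matched to term operations that behave correctly in \emph{every} $\mathcal{N}3$-lattice, not merely in the subdirectly irreducible ones.
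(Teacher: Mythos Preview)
Your proposal is correct and follows essentially the same approach as the paper: for the extension, invoke the Spinks--Veroff result that every $\mathcal{N}3$-lattice is (term-equivalent to) a $3$-potent involutive CIBRL, i.e.\ an $\NS'$-algebra; for properness, use the existence of non-distributive $\NS$-algebras versus the distributivity of $\mathcal{N}3$-lattices. The paper's own proof is a two-sentence version of exactly this argument, citing \cite{SpVe08a} and \cite[Example~5.1]{Igpl}; your additional discussion of how the constants $1$ and $\bot$ are matched, and of the role of the shared defining equations and equivalence formulas, makes explicit what the paper leaves implicit.
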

\begin{proof} 
	It is known from \cite{SpVe08a} that every $\mathcal{N}3$-lattice 
	(the algebraic counterpart of $\mathcal{N}3$) 
	satisfies all properties of our Definition~\ref{def:slinha}, and therefore every $\mathcal{N}3$-lattice is an~$\NS$-algebra. On the other hand, the logic $\mathcal{N}3$ was defined as an axiomatic extension of $\mathcal{N}4$, therefore it is distributive too, whereas~$\NS$-algebras need not be distributive (see~\cite[Example 5.1]{Igpl}).
\end{proof}

\section{Future Work} 

We have studied~$\NS$ in two directions, through a proof-theoretic approach and through algebraic methods. Concerning the proof-theoretic approach, we have introduced a finite Hilbert-style calculus for~$\NS$. An interesting question that still remains  is about other types of calculi. In this sense we would find it attractive to be able to present a sequent calculus for~$\NS$ enjoying a cut-elimination theorem, so that it could be used to determine, among other things, whether~$\NS$ is decidable and enjoys the Craig interpolation theorem.

As observed in Theorem~\ref{deduction}, if we let $\phi \to \psi :=  \phi \Rightarrow ( \phi \Rightarrow  \psi)$, then the weak implication $\rightarrow$ enjoys a version of the Deduction Metatheorem;
this suggests that  the connective $\to$ has a special logical role within~$\NS$, whereas $\Rightarrow$ 
is the key operation on the corresponding algebras.
It is well known that the logic $\mathcal{N}3$ as well as
its algebraic counterpart 
can be equivalently axiomatized by taking either the weak or
the strong implication 
as primitive, defining $\to$ from 
$\Rightarrow$ 
as shown above. 
%
The analogous result for $\mathcal{N}4$ has been harder to prove (see \cite{SpinXX}), and 
the corresponding definition is
$\phi \rightarrow \psi := (\phi \land (((\phi \land (\psi \Rightarrow \psi))\Rightarrow \psi) \Rightarrow ((\phi \land (\psi \Rightarrow \psi)) \Rightarrow \psi))) \Rightarrow ((\phi \land (\psi \Rightarrow \psi )) \Rightarrow \psi)$. 

We can ask a similar 
question about the logic~$\NS$ and its algebraic counterpart: namely, 
given that Nelson's  axiomatization as well as ours have $\Rightarrow$ as primitive, is it also possible to axiomatize~$\NS$(-algebras) by taking  the weak implication $\to $ 
as primitive? This question is related to certain algebraic properties that  $\to $ enjoys on $\NS$-algebras. In fact, we have shown 
in~\cite[Theorem 4.5]{Igpl}
that, analogously to $\mathcal{N}3$-lattices, $\NS$-algebras are a variety of
\emph{weak Brouwerian semilattices with filter-preserving operations}~\cite[Definition 2.1]{BKP-EDPC-II},
which means that they possess  an intuitionistic-like internal structure, where
a \emph{weak relative pseudo-complementation} operation (an intuitionistic-like implication)
is given precisely by the weak implication. 
This suggests that one may in fact hope to be able to  view (and axiomatize) $\NS$ as a conservative expansion of 
some intuitionistic-like positive logic by a strong (involutive) negation, 
as has been the case of  $\mathcal{N}3$ and  $\mathcal{N}4$.

As hinted above, a more detailed study of $\NS$-algebras can be found in the companion paper~\cite{Igpl}. Some questions regarding the variety of $\NS$-algebras, its extensions, congruences and more relations between $\NS$-algebras and other well-known algebras are investigated there. Another question that is still open is which 
logic
(class of algebras) is the infimum of $\NS$(-algebras) and $\mathcal{N}4$(-lattices) --- it is easy to see that 
the least logic extending $\NS$ and  $\mathcal{N}4$ is precisely $\mathcal{N}3$.



\bibliographystyle{amsplain}

\begin{thebibliography}{10}
	\providecommand{\url}[1]{\texttt{#1}}
	\providecommand{\urlprefix}{URL }
	\providecommand{\doi}[1]{https://doi.org/#1}
	
	\bibitem{AlNe84}
	Almukdad, A., Nelson, D.: Constructible falsity and inexact predicates. The
	Journal of Symbolic Logic  \textbf{49}(1),  231--233 (1984).
	
	
	\bibitem{BKP-EDPC-II}
	Blok, W.J., K{\"o}hler, P., Pigozzi, D.:
	On the structure of varieties with equationally definable principal
	congruences {II}. Algebra Universalis\textbf{18}, 334--379 (1984).
	
	
	\bibitem{BP89}
	Blok, W.J., Pigozzi, D.: Algebraizable Logics, Memoirs of the American Mathematical
	Society, vol.~396.
	A.M.S., Providence (Jan 1989).
	
	\bibitem{BoGaV06}
	Bou, F., Garc{\'{\i}}a-Cerda{\~n}a, {\`A}., Verd{\'u}, V.: On two fragments
	with negation and without implication of the logic of residuated lattices.
	Archive for Mathematical Logic  \textbf{45}(5),  615--647 (2006).
	
	\bibitem{Busa10}
	Busaniche, M., Cignoli, R.: Constructive logic with strong negation as a
	substructural logic.
	\newblock Journal of Logic and Computation \textbf{20}, 761--793 (2010)
	
	\bibitem{Ci86}
	Cignoli, R.: The class of {K}leene algebras satisfying an interpolation
	property and {N}elson algebras.
	\newblock Algebra Universalis \textbf{23}(3), 262--292 (1986).
	
	\bibitem{CiMuOt99}
	Cignoli, R., D'Ottaviano, I.M.L., Mundici, D.: Algebraic Foundations of
	Many-Valued Reasoning, Trends in Logic---Studia Logica Library, vol.~7.
	Kluwer Academic Publishers, Dordrecht (2000).
	
	\bibitem{Font16}
	Font, J.M.: Abstract Algebraic Logic. An Introductory Textbook. College
	Publications (2016).
	
	
	\bibitem{GaJiKoOn07}
	Galatos, N., Jipsen, P., Kowalski, T., Ono, H.: Residuated Lattices: An
	algebraic glimpse at substructural logics, Studies in Logic and the
	Foundations of Mathematics, vol.~151. Elsevier, Amsterdam (2007).
	
	\bibitem{GaRa04}
	Galatos, N., Raftery, J.G.: Adding involution to residuated structures. Studia
	Logica  \textbf{77}(2),  181--207 (2004).
	
	\bibitem{Hu}
	Humberstone, L.: The connectives. MIT Press (2011).
	
	\bibitem{Kl45}
	Kleene, S.C.: On the interpretation of intuitionistic number theory. The
	Journal of Symbolic Logic  \textbf{10},  109--124 (1945).
	
	\bibitem{Igpl}
	Nascimento, T., Marcos, J., Rivieccio, U., Spinks, M.: Nelson's logic~$\NS$. 
	Preprint available at \url{https://arxiv.org/abs/1803.10851}.
	
	\bibitem{Nel49}
	Nelson, D.: Constructible falsity. The Journal of Symbolic Logic  \textbf{14},
	16--26 (1949). 
	
	\bibitem{Nel59}
	Nelson, D.: Negation and separation of concepts in constructive systems.
	Studies in Logic and the Foundations of Mathematics \textbf{39},  205--225
	(1959).
	
	\bibitem{Od08}
	Odintsov, S.P.: Constructive Negations and Paraconsistency. Springer (2008).
	
	\bibitem{Re93c}
	Restall, G.: How to be \emph{really} contraction free.
	Studia Logica,   \textbf{52}(3), 381--391(1993).
	
	
	\bibitem{Ri11c}
	Rivieccio, U.: Paraconsistent modal logics. Electronic Notes in Theoretical
	Computer Science  \textbf{278},  173--186 (2011).
	
	\bibitem{SpVe08a}	
	Spinks, M., Veroff, R.: Constructive logic with strong negation is a
	substructural logic. {II}. Studia Logica  \textbf{89}(3),  401--425 (2008).
	
	\bibitem{SpinXX} 
	Spinks, M., Veroff, R.: Paraconsistent constructive logic with strong negation
	as a contraction-free relevant logic. In: Czelakowski, J. (ed.) Don Pigozzi
	on Universal Algebra and Abstract Algebraic Logic, Outstanding Contributions
	to Logic, vol.~16 (2018).
	
\end{thebibliography}

\section*{Appendix: Proofs of the some of the main results}

\textbf{Theorem \ref{alg}.} The calculus $\vdash_{\NS}$ is implicative, and thus algebraizable.
\begin{proof}
	In the case of~$\NS$, the term~$\alpha(\phi, \psi)$ may be chosen to be $\phi \Rightarrow \psi$.
	We will make below free use of Prop.~\ref{p:eqv}.
	
	\rm{[IL1]} follows immediately from axiom \texttt{(A1)}, while
	\rm{[IL2]} follows from rule $\mathrm{\texttt{(E)}}$.
	\rm{[IL3]}  follows from \texttt{(MP)}
	and 
	\rm{[IL4]} follows from  $\mathrm{(\Rightarrow \texttt{r})}$.
	We are left with proving that $\Rightarrow$ respects \rm{[IL5]} for each connective $ \bullet \in \{ \land, \lor, \Rightarrow, \neg \}$.
	\begin{description}
		\item[$(\neg)$] 
		$\{ (\phi \Leftrightarrow \psi), (\psi \Leftrightarrow \phi) \} \vdash_{\NS}  \neg \phi \Leftrightarrow \neg \psi$ 
		holds by axiom \texttt{(A5)} and the (derived) rule \texttt{(MP)}.
		\medskip
		
		\item[$(\land)$] 
		We must prove that $\{ (\phi_{1} \Leftrightarrow \psi_{1}), (\phi_{2} \Leftrightarrow \psi_{2})\} \vdash_{\NS}  (\phi_{1}\land \phi_{2}) \Leftrightarrow  (\psi_{1} \land \psi_{2})$. From Proposition~\ref{p:ths}.1--2 we have
		$\vdash_{\NS} (\phi_{1} \land \phi_{2}) \Rightarrow \phi_{1}$ and $\vdash_{\NS} (\phi_{1} \land \phi_{2}) \Rightarrow \phi_{2}$.  Then:
		%
		%
		$$\infer[\mathrm{(\land \texttt{r})}]
		{(\phi_{1} \land \phi_{2}) \Rightarrow (\psi_{1} \land \psi_{2}) }{\infer[\texttt{(E)}]{(\phi_{1} \land \phi_{2}) \Rightarrow \psi_{1}}{\infer[]{(\phi_{1} \land \phi_{2}) \Rightarrow \phi_{1}}  \quad \phi_{1} \Rightarrow \psi_{1}} \quad \infer[\texttt{(E)}]{(\phi_{1} \land \phi_{2}) \Rightarrow \psi_{2}}{\infer[]{(\phi_{1} \land \phi_{2}) \Rightarrow \phi_{2}}{}  \quad \phi_{2} \Rightarrow \psi_{2}}} $$
		The remainder of the proof is analogous. \medskip
		
		\item[$(\lor)$] We must prove that $\{ (\phi_{1} \Leftrightarrow \psi_{1}), (\phi_{2} \Leftrightarrow \psi_{2})\} \vdash_{\NS}  (\phi_{1}\lor \phi_{2}) \Leftrightarrow  (\psi_{1} \lor \psi_{2})$. 
		From Proposition~\ref{p:ths}.3--4, $\psi_{1} \Rightarrow (\psi_{1} \lor \psi_{2})$ and $\psi_{2} \Rightarrow (\psi_{1} \lor \psi_{2})$ are derivable.  Then:
		%
		$$\infer[\mathrm{(\lor \texttt{l1})}] 
		{(\phi_{1} \lor \phi_{2}) \Rightarrow (\psi_{1} \lor \psi_{2})}{\infer[\texttt{(E)}]{\phi_{1} \Rightarrow (\psi_{1} \lor \psi_{2})}{\phi_{1} \Rightarrow \psi_{1} \quad \infer[]{\psi_{1} \Rightarrow (\psi_{1} \lor \psi_{2})}{}}  \quad \infer[\texttt{(E)}]{\phi_{2} \Rightarrow (\psi_{1} \lor \psi_{2})}{\phi_{2} \Rightarrow \psi_{2} \quad \infer[]{\psi_{2} \Rightarrow (\psi_{1} \lor \psi_{2})}{}
		}} $$
		The remainder of the proof is analogous. \medskip
		
		\item[$(\Rightarrow)$] We must prove that $\{ (\theta \Leftrightarrow \phi), (\psi \Leftrightarrow \gamma) \}\vdash_{\NS} (\theta \Rightarrow \psi) \Leftrightarrow (\phi \Rightarrow \gamma)$. This time, 
		we have:
		$$\infer[\mathrm{(\Rightarrow \texttt{l})}]
		{\phi \Rightarrow ((\theta \Rightarrow \psi) \Rightarrow \gamma)}{\phi \Rightarrow \theta  \qquad \psi \Rightarrow \gamma}$$
		Taking $\psi$ as $\theta\Rightarrow\psi$ in Prop.~\ref{p:ths}.5, 
		we have: 
		%
		$$
		\infer[\mathrm{\texttt{(MP)}}]{(\theta \Rightarrow \psi) \Rightarrow (\phi \Rightarrow \gamma)}{\phi \Rightarrow ((\theta \Rightarrow \psi) \Rightarrow \gamma) \quad \infer[]{(\phi \Rightarrow ((\theta \Rightarrow \psi) \Rightarrow \gamma)) \Rightarrow ((\theta \Rightarrow \psi) \Rightarrow (\phi \Rightarrow \gamma))}{}}
		$$
		The remainder of the proof is analogous.
	\end{description}
\end{proof}

\noindent \textbf{Proposition \ref{prop:coisas}.} Let  $\mathbf{A}$  be an $\NS$-algebra and let $a,b, c \in A$. Then:
\begin{enumerate}
	\item 
	$a \Rightarrow a = 
	1 = \neg 0$. 
	
	\item The relation $\leq$  defined by setting
	$a \leq b $ iff $a \Rightarrow b = 1 $, 
	is a partial order with maximum~$1$ and
	minimum $0$. 
	
	\item $  a \Rightarrow b = \neg b \Rightarrow \neg a$.
	
	\item $a \Rightarrow (b \Rightarrow c) = b \Rightarrow (a \Rightarrow c)$. 
	
	\item $\neg \neg a = a $ and $a \Rightarrow 0 = \neg a $.
	
	\item $\langle A,*,1\rangle$ is a commutative monoid.
	
	\item $(a * b) \Rightarrow c = a \Rightarrow (b \Rightarrow c)$.
	
	\item 	The pair $( *, \Rightarrow )$ is residuated with respect to $\leq$, i.e., 
	\
	$
	a * b \leq c \  \textrm{ iff } \ b \leq a \Rightarrow c.
	$ 
	
	\item $a^2 \leq a^3$.
	
	\item $\langle A, \land, \lor \rangle$ is a lattice with order $\leq$. 
	
	\item $(a \lor b)^2 \leq a^2 \lor b^2$.
	
\end{enumerate}

\begin{proof}
	
	\begin{enumerate} 
		
		\item This follows from the fact that $\NS$ is an  implicative logic, see \cite[Lemma 2.6]{Font16}.
		In particular, $\neg 0 = 0 \Rightarrow 0 = 1$.
		\smallskip
		\item By $\mathsf{E}(\texttt{A2})$ we have that 0 is the minimum element with respect to the order $\leq$. The rest easily follows from the fact that $\NS$ is implicative.
		\smallskip
		\item This follows from $\mathsf{E}(\texttt{A5})$ and item 2 above.
		\smallskip
		\item By $\mathsf{Q} (\mathrm{\texttt{P}})$ and item 2 above, we have that $d \leq a \Rightarrow (b \Rightarrow c) $ implies $d \leq b \Rightarrow (a \Rightarrow c)$ for all $d \in A$. Then, taking $d = a \Rightarrow (b \Rightarrow c)$, we have $a \Rightarrow (b \Rightarrow c) \leq b \Rightarrow (a \Rightarrow c)$, which easily implies the desired result.
		\smallskip
		\item The identity $\neg \neg a = a$ follows from item 2 above together with $\mathsf{Q}(\mathrm{\neg \neg \texttt{l}})$ and $\mathsf{Q}(\mathrm{\neg \neg \texttt{r})}$. By item 3 above, $a \Rightarrow 0 =  \neg 0 \Rightarrow \neg a = 1 \Rightarrow \neg a = \neg a$. The last identity holds good because, on the one hand, by $\mathsf{Q} \mathrm{(\Rightarrow \texttt{l})}$ we have that $1 \leq 1$ and $\neg a \leq \neg a$ implies $1 \Rightarrow \neg a \leq \neg a$. On the other hand, by item 1 we have $\neg a \Rightarrow \neg a \leq 1$ and so we can apply $\mathsf{Q} \mathrm{(\Rightarrow \texttt{r})}$ to obtain $1 \Rightarrow (\neg a \Rightarrow \neg a) = 1$. By item 4, we have $1 \Rightarrow (\neg a \Rightarrow \neg a) = \neg a \Rightarrow (1 \Rightarrow \neg a)$, hence we conclude that $\neg a \Rightarrow (1 \Rightarrow \neg a) = 1$ and so, by item 2, $\neg a \leq 1 \Rightarrow \neg a$.
		\smallskip
		\item As to commutativity, using items 3 and 5 above, we have $a * b = \neg(a \Rightarrow \neg b) = \neg ( \neg \neg b \Rightarrow \neg a) = \neg ( b \Rightarrow \neg a) = b * a$.  
		As to associativity, 
		using 3, 5, $		\mathsf{Q}(\mathrm{\neg \neg \texttt{r})}$ and $		\mathsf{Q}(\mathrm{\neg \neg \texttt{l})}$,
		we have $(a * b) * c = 
		\neg(\neg(a \Rightarrow \neg b) \Rightarrow \neg c) =
		\neg( \neg \neg c \Rightarrow \neg \neg (a \Rightarrow \neg b) =
		\neg( c \Rightarrow (a \Rightarrow \neg b))
		=  \neg( a \Rightarrow (c \Rightarrow \neg b)) = \neg( a \Rightarrow (b \Rightarrow \neg c)) = \neg( a \Rightarrow \neg \neg (b \Rightarrow \neg c)) = a * (b * c) 
		$.
		As to 1 being the neutral element, using items 1 and 5 above, we have $a * 1 = a * \neg 0 =\neg (a \Rightarrow \neg \neg 0 )=  \neg (a \Rightarrow 0 ) = \neg \neg a = a $.
		\smallskip
		\item Using items 2, 3, 5 and 6 above, we have  
		$(a * b) \Rightarrow c = \neg (a \Rightarrow \neg b) \Rightarrow c 
		=  \neg c \Rightarrow \neg \neg (a \Rightarrow \neg b) 
		= \neg c \Rightarrow (a \Rightarrow \neg b) = a \Rightarrow (\neg c \Rightarrow \neg b)
		= a \Rightarrow ( \neg \neg b \Rightarrow \neg \neg c) 
		= a \Rightarrow ( b \Rightarrow c)$.
		\smallskip
		\item 
		By item 2 above, we have
		$a * b \leq c$ iff $(a * b) \Rightarrow c = 1$ iff, by item 7,
		$a \Rightarrow (b \Rightarrow c) = 1$ iff, by item 6,
		$b \Rightarrow ( a \Rightarrow c) = 1$ iff, by 2 again, $b \leq a \Rightarrow c$.
		\smallskip
		\item 
		By  $		\mathsf{Q}(\texttt{C})$ we have that $a^3 \leq c$ implies $a^2 \leq c$ for all $c \in A$. Then, taking $c = a^3$, we have $a^2 \leq a^3$.
		\smallskip
		\item 
		We check that $a \land b$ is the infimum of the set $\{a,b \}$ with respect to $\leq$. First of all, we have $a \land b \leq a$ and $a \land b \leq b$ by $\mathsf{Q} \mathrm{(\land \texttt{l1})}$, $\mathsf{Q} \mathrm{(\land \texttt{l2})}$ and item 2 above. Then, assuming $c \leq a$ and $c \leq b$, we have $c \leq a \land b$ by $\mathsf{Q} \mathrm{(\land \texttt{r})}$. An analogous reasoning, using $\mathsf{Q} \mathrm{(\lor \texttt{r1})}$, $\mathsf{Q} \mathrm{(\lor \texttt{r2})}$  and  $\mathsf{Q} \mathrm{(\lor \texttt{l1})}$ shows that $a \lor b$ is the supremum of $\{a,b \}$.
		\smallskip
		\item 
		By item 10 we have that $a^2   \leq  a^2 \lor b^2$ and $b^2   \leq  a^2 \lor b^2$. Hence, by item 8, we have $a  \leq  a \Rightarrow (a^2 \lor b^2)$ and $b  \leq  b \Rightarrow (a^2 \lor b^2)$. By item 2 we have then $a  \Rightarrow ( a \Rightarrow (a^2 \lor b^2)) = b  \Rightarrow ( b \Rightarrow (a^2 \lor b^2)) = 1$, hence we can use $\mathsf{Q} \mathrm{(\lor \texttt{l2})}$ to obtain $(a \lor b)  \Rightarrow ( (a \lor b) \Rightarrow (a^2 \lor b^2)) = 1$. Then  items 2 and 8 give us $(a \lor b)^2 \leq a^2 \lor b^2$, as was to be proved.
	\end{enumerate} 
\end{proof}

\noindent \textbf{Lemma~\ref{lemma1}.}
\begin{enumerate}
	\item	Any CIBRL 
	satisfies the equation $ (x\lor y)*z \approx (x*z) \lor (y*z)$.
	\item Any 3-potent CIBRL 
	satisfies 
	$x^{2} \lor y^{2} \approx (x^{2} \lor y^{2})^{2}$.
	\item  Any 3-potent CIBRL 
	satisfies 
	$(x\lor y^{2})^{2} \approx (x\lor y)^{2}.$
	\item Any 3-potent CIBRL 
	satisfies  $( x\lor y)^{2} \approx x^{2} \lor y^{2}$.
\end{enumerate}
\begin{proof}
	\begin{enumerate}
		\item  See~\cite[Lemma 2.6]{GaJiKoOn07}.
		\smallskip
		\item Let $a,b$ be arbitrary elements of a given 3-potent CIBRL. From $a^{2} \leq (a^{2} \lor b^{2})$ and $b^{2} \leq (a^{2} \lor b^{2})$, using monotonicity of~$*$, we have $a^{4} \leq (a^{2} \lor b^{2})^{2}$ and  $b^{4} \leq (a^{2} \lor b^{2})^{2}$. Using 3-potency, 
		the latter inequalities simplify to	$a^{2} \leq (a^{2} \lor b^{2})^{2}$ and $b^{2} \leq (a^{2} \lor b^{2})^{2}$. Thus, $a^{2} \lor b^{2} \leq (a^{2} \lor b^{2})^{2}$.
		\smallskip
		\item We have $a\lor b^{2} \leq a\lor b$ from monotonicity of~$*$ and supremum of~$\lor$, therefore $(a\lor b^{2})^{2} \leq (a\lor b)^{2}$. For the converse, we have that $a*b \leq a$, whence  $a*b \leq a\lor b^{2}$. Also $a^2 \leq a\lor b^{2}$ and $b^{2} \leq a\lor b^{2}$. By supremum of~$\lor$, $a^{2} \lor (a*b) \lor b^{2} \leq a\lor b^{2}$. But $a^{2} \lor (a*b) \lor b^{2} = (a\lor b)^{2}$ by Lemma~\ref{lemma1}.1, so $(a\lor b)^{2} \leq  a\lor b^{2}$. Using the monotonicity of~$*$, $(a\lor b)^{4} \leq  (a\lor b^{2})^{2}$ and from 3-potency we have $(a\lor b)^{2} \leq  (a\lor b^{2})^{2}$.
		\smallskip
		\item From Lemma~\ref{lemma1}.2 we have $a^{2} \lor b^{2} = (a^{2} \lor b^{2})^{2}$, 
		and from Lemma~\ref{lemma1}.3 we have
		$(a^{2} \lor b^{2})^{2} = (a^{2} \lor b)^{2} = (b \lor a^{2})^{2} = (b \lor a)^{2}$.
	\end{enumerate}
\end{proof}

\noindent \textbf{Proposition \ref{equivalence}.}
Let $\mathbf{A} = \langle A, \land, \lor, *, \Rightarrow, 0, 1 \rangle$ be an $\NS^{\prime}$-algebra. Defining $\neg x := x \Rightarrow 0$, 
we have that  $\mathbf{A'} = \langle A, \land, \lor, \Rightarrow,  \neg, 0, 1 \rangle$ is an $\NS$-algebra.

\begin{proof} 
	Let $\mathbf{A}$ be an $\NS^{\prime}$-algebra. We first consider the equations corresponding to the axioms of $\NS$. As  $a \leq b $ iff $a \Rightarrow b = 1 $, we will write the former rather than the latter. \smallskip\\
	\noindent
	\textit{Equations} \\
	The equation
	$\mathsf{E} ( \texttt{A1} )$ easily follows from integrality. We have
	$\mathsf{E} ( \texttt{A2} )$  from the fact that $0$ is the minimum element of $\mathbf{A}$.
	From the definition of~$\neg$ in~$\NS^{\prime}$ and from $\mathsf{E} ( \texttt{A1} )$ we see that $\mathsf{E} ( \texttt{A3} )$ holds.
	We know that $1 := \neg 0$, therefore we have $\mathsf{E} ( \texttt{A4} )$. As  $\mathbf{A}$  is  involutive, it follows that $\mathsf{E} ( \texttt{A5} )$ holds. We are still to prove the equation  $\mathsf{E}(\Delta(\varphi, \varphi))$. For that, see that we need to prove the identity  $(\phi \Rightarrow \phi) \land (\phi \Rightarrow \phi) = 1$, and we already know that $\phi \Rightarrow \phi = 1$, therefore also $(\phi \Rightarrow \phi) \land (\phi \Rightarrow \phi) = 1$.
	\medskip\\
	\textit{quasiequations}
	\\
	$\mathsf{Q}(\texttt{P})$ follows from the commutativity of~$*$ and from the identity $(a *b ) \Rightarrow c = a \Rightarrow (b \Rightarrow c)$.
	$\mathsf{Q}(\texttt{C})$ follows from 3-potency: since  $ a^2 \leq a^3$, we have that
	$a^3 \Rightarrow b = 1$ implies $a^2 \Rightarrow b=1$.\smallskip \\
	$\mathsf{Q}(\texttt{E})$ follows from the fact that $\mathbf{A}$ has a partial order $\leq$ that is determined by the implication $\Rightarrow$.
	To prove $\mathsf{Q}(\mathrm{\Rightarrow \texttt{l}})$, suppose $a \leq b$ and $c \leq d$. From $c \leq d$, as $b \Rightarrow c \leq  b \Rightarrow c$, using residuation we have that $b*(b \Rightarrow c) \leq c \leq d$, therefore $b*(b \Rightarrow c) \leq d$ and therefore $b \Rightarrow c \leq b \Rightarrow d$. Note that as $a \leq b$, using residuation we have that $a*(b \Rightarrow d) \leq b*(b \Rightarrow d) \leq d$, therefore $b \Rightarrow d \leq a \Rightarrow d$ and then $b \Rightarrow c \leq a \Rightarrow d$. Now, since $b \Rightarrow c \leq a \Rightarrow d$ iff $a*(b \Rightarrow c) \leq d$ iff $a \leq (b \Rightarrow c) \Rightarrow d$, we obtain thus the desired result.\smallskip \\
	For $\mathsf{Q}(\mathrm{\Rightarrow \texttt{r}})$  we need to prove that if $d = 1$, then  $b \Rightarrow d = 1$. 
	This follows immediately from integrality.\smallskip \\
	quasiequations $\mathsf{Q}(\mathrm{\land \texttt{l1}})$, $\mathsf{Q}(\mathrm{\land \texttt{l2}})$, $\mathsf{Q}(\mathrm{\land \texttt{r}})$, $\mathsf{\mathrm{Q}}(\mathrm{\lor \texttt{l1}})$, $\mathsf{Q}(\mathrm{\lor \texttt{r1}})$ and $\mathsf{Q}(\mathrm{\lor \texttt{r2}})$ follow
	straightforwardly from the fact that $\mathbf{A}$ is partially ordered and the order is determined by the implication.\smallskip \\
	In order to prove $\mathsf{\mathrm{Q}}(\mathrm{\lor \texttt{l2}})$, notice that $(b \lor c)^2 \leq b^2 \lor c^2$ by 
	Lemma~\ref{lemma1}.4.
	Suppose $b^2 \leq d$ and $c^2 \leq d$, then since $\mathbf{A}$ is a lattice, we have $b^2 \lor c^2 \leq d$ and as $(b \lor c)^2 \leq b^2 \lor c^2$ we conclude that $(b \lor c)^2 \leq d$ and thus $(b \lor c)^2 \Rightarrow d = 1$.\smallskip \\
	As to $\mathsf{Q}(\mathrm{\neg \Rightarrow \texttt{l}})$, by integrality we have $b * c \leq b$ and $b * c \leq c$. Thus $b * c \leq b \land c$. Now, if $b \land c \leq d$, then $b * c \leq d$.\smallskip \\ 
	In order to prove $\mathsf{Q}(\mathrm{\neg \Rightarrow \texttt{r}})$, suppose $d^2 \leq b \land c$. Using monotonicity of~$*$, we have $d^2*d^2 \leq (b \land c)*(b \land c)$, i.e., $d^4 \leq (b \land c)^2$. Using 3-potency, we have $d^4 = d^2$, therefore $d^2 \leq (b \land c)^2$.   Since $(b \land c)^2 \leq b * c$, we have $d^2 \leq (b \land c)^2 \leq b* c$, i.e., $d^2 \leq b* c$.\smallskip \\
	$\mathsf{Q}(\mathrm{\neg \land \texttt{l}})$, $\mathsf{Q}(\mathrm{\neg \land \texttt{r}})$, $\mathsf{Q}(\mathrm{\neg \lor \texttt{l}})$  and $\mathsf{Q}(\mathrm{\neg \lor \texttt{l}})$ follow from the De Morgan's Laws (cf.~\cite[Lemma 3.17]{GaJiKoOn07}).\smallskip \\
	Finally,
	we have $\mathsf{Q}(\mathrm{\neg \neg \texttt{l}})$ and $\mathsf{Q}(\mathrm{\neg \neg \texttt{r}})$ from $\mathbf{A}$ being involutive.\smallskip \\
	It remains to be proven that the quasiequation $ \mathsf{E}(\Delta(\varphi,\psi)) $ implies $  \varphi \approx \psi$, that is, if $\varphi \Rightarrow \psi = 1$ and $\psi \Rightarrow \varphi = 1$, then $\varphi = \psi$. As 1 is the maximum of the algebra, we have that $\varphi \Rightarrow \psi = 1$ and $\psi \Rightarrow \varphi = 1$, therefore $\varphi \leq \psi$ and $\psi \leq \varphi$.  As~$\leq$ is an order relation, it follows that  $\varphi = \psi$.
\end{proof}

\end{document}